\title{Complex analytic properties of \\ minimal Lagrangian submanifolds}
\author{Roberta Maccheroni \\ (roberta.maccheroni@unife.it)}
\date{\today}
\theoremstyle{definition}
\newtheorem{teo}{Theorem}[section]
\newtheorem{lem}[teo]{Lemma}
\newtheorem{prop}[teo]{Proposition}
\newtheorem{cor}[teo]{Corollary}
\theoremstyle{plain}
\newtheorem{defin}[teo]{Definition}
\theoremstyle{remark}
\newcommand{\R}{\mathbb{R}}
\newcommand{\C}{\mathbb{C}}
\newcommand{\D}{\mathbb{D}}
\newcommand{\p}{\mathbb{P}^3}
\newcommand{\tonde}[1]{\left( #1 \right)}
\newcommand{\quadre}[1]{\left[ #1 \right]}
\newcommand{\graffe}[1]{\left\{ #1 \right\}}
\newcommand{\modulo}[1]{\left| #1 \right|}
\newcommand{\norma}[1]{\left|\left| #1 \right|\right|}
\DeclareMathOperator{\Diff}{Diff}
\DeclareMathOperator{\Imm}{Imm}
\DeclareMathOperator{\grad}{grad}
\DeclareMathOperator{\Ric}{Ric}
\DeclareMathOperator{\cil}{cyl}
\DeclareMathOperator{\Vol}{Vol}
\DeclareMathOperator{\vol}{vol}
\DeclareMathOperator{\ind}{index}
\DeclareMathOperator{\rnk}{rank}
\DeclareMathOperator{\coker}{coker}
\begin{document}
	
	\maketitle
	
	\begin{abstract}
		
		In this article we study complex properties of minimal Lagrangian submanifolds in K\"{a}hler ambient spaces, and how they depend on the ambient curvature. In particular, we prove that, in the negative curvature case, minimal Lagrangians do not admit fillings by holomorphic discs. The proof relies on a mix of holomorphic curve techniques and on recent convexity results for a perturbed volume functional.
	\end{abstract}

\section{Introduction}
An immersed submanifold $\iota: L^n \hookrightarrow M^{2n}$ in a K\"{a}hler manifold $\tonde{M, J, g, \omega } $ is said to be \emph{minimal} if the mean curvature vector field $H$ is zero; equivalently, if it is a critical point of the Riemannian volume. The submanifold is \emph{Lagrangian} if the induced K\"{a}hler form on $L$ is zero, i.e. $\iota^* \omega \equiv 0$.
From now on we will identify $L$ with its image $\iota \tonde{L}$.

Thanks to the compatibility condition $g \tonde{ \cdot, \cdot}= \omega \tonde{ \cdot, J \cdot}$, $L$ is Lagrangian if and only if 
$$
T_pM = T_pL \stackrel{\bot}{\oplus} J \tonde{T_pL}. 
$$
It follows that a Lagrangian submanifold has a special linear-algebraic property with respect to $J$: it is \emph{totally real}, i.e. $T_pL \cap J \tonde{T_pL}= \graffe{0}.$

We are interested in the geometric properties of compact submanifolds that are simultaneously minimal and Lagrangian. 
Notice that these conditions involve only the Riemannian and symplectic ambient structures, and indeed up to now minimal Lagrangian submanifolds have been studied mainly from the Riemannian point of view, such as the second variation formula and stability under the mean curvature flow. However, the above compatibility condition suggests that these submanifolds should also have interesting complex analytic properties.

Specifically, we will investigate the existence of holomorphic discs with boundary on a fixed minimal Lagrangian, and the existence of fillings by holomorphic discs. Holomorphic discs and disc fillings are a classical problem, cf. \cite{Alexander1},\cite{DuvalSibony},\cite{gromovpseudoholom}, \cite{eliashberg}, \cite{Zehmisch}, but the literature generally focuses on symplectic or complex assumptions on the submanifold, such as the existence of complex points or being contained in the boundary of a pseudo-convex domain. By substituting those assumptions with the Riemannian condition of being minimal we are taking a novel direction, cf. also \cite{cielgold}, which we expect may have developments beyond those studied here.  We remark that the minimal Lagrangian condition is typically over-constrained, cf. \cite{bryantproceedings}, unless one restricts to K\"ahler-Einstein ambient spaces. Accordingly, this will be our main focus.

In order to put the problem into context, consider the situation in dimension 1. In this case $M$ is a Riemann surface with constant curvature and the compact minimal Lagrangians are exactly the closed geodesics. We then observe that the topological and complex analytic properties of $L$ depend on the sign of the curvature, as follows.
\begin{itemize}	
	\item
	If $M$ has positive curvature, i.e. $M$ is the sphere $S^2$, geodesics are maximal radius circles.  In this case they are homotopically trivial (see Figure \ref{fig:surfaces}), thus bound a holomorphic disc. 
	\item	 If $M$ has zero curvature, i.e. $M$ is the torus $T^2$, geodesics are not homologically trivial, so they do not bound discs.
	
	\item	  If $M$ has negative curvature, there exist examples of homologically trivial geodesics. 
	The closed geodesic on the genus $2$ surface $\Sigma_2$ in Figure \ref{fig:surfaces}, for example, is the boundary of a handle $N$. The Gauss-Bonnet theorem shows however that such curves cannot bound a disc.
	
\end{itemize}
	 \begin{figure}[h] 
	 	\centering
	 	\includegraphics[width=0.8\linewidth]{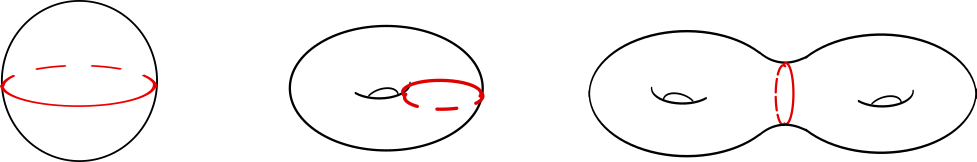}
	 	\caption{Minimal Lagrangians on surfaces}
	 	\label{fig:surfaces}
	 \end{figure}

To understand the appropriate analogue of these properties in higher dimensions, consider the K\"ahler-Einstein manifold  $\mathbb{P}^1 \tonde{\C} \times \mathbb{P}^1 \tonde{\C}$, with positive scalar curvature. The product $\gamma\times\gamma$ of maximal circles is a minimal Lagrangian $L$. Each $\gamma \times \graffe{*}$ is the boundary of a holomorphic disc; the product is homologically trivial,  and in fact $L$ admits a filling by holomorphic discs.

On the other hand, in the Ricci-flat (more precisely, Calabi-Yau) case, the theory of calibrations shows that minimal Lagrangians minimize volume in their homology class. They are thus never homologically trivial, and in particular they cannot be filled by holomorphic discs.

We are thus led to trying to understand whether the negative curvature case has a distinctive behaviour in regards to these properties, as in dimension $1$.

In Section \ref{section nonexistence of discs} 
we focus on non existence results for holomorphic discs, using standard techniques based on subharmonic functions. Thanks to these results we can examine the simplest class of examples: product spaces. In this very special situation, we show that minimal Lagrangians do not admit even a single holomorphic disc, so they clearly do not admit holomorphic disc fillings even though, in contrast with the Ricci-flat case, they may be homologically trivial.

In Section \ref{Rigidity} 
 we focus on rigidity results for holomorphic discs, using standard techniques from the theory of $J$-holomorphic curves. This leads to the following non existence result for holomorphic disc fillings, in marked contrast with the case of positive curvature seen above.

\begin{teo} \label{solid torus}
	Let $M$ be a $4$-dimensional K\"{a}hler-Einstein manifold with $\Ric \leq 0$ and let $\iota: S^1 \times S^1 \hookrightarrow M$ be a minimal Lagrangian immersed torus. Then it does not admit any filling by holomorphic discs, \emph{i.e.} there is no smooth map $F: S^1 \times \overline{\D} \longrightarrow M$ such that 
	\begin{enumerate} [(i)]
		\item $F|_{S^1 \times \partial \D} = \iota $ (up to reparametrization);
		\item $F|_{\graffe{\psi } \times \D}: \graffe{\psi } \times \D \longrightarrow M$ is a holomorphic immersion, $\forall \psi \in S^1$.
	\end{enumerate}
\end{teo}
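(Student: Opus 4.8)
The plan is to argue by contradiction. Suppose $F$ is such a filling and write $D_\psi := F(\{\psi\}\times\overline{\D})$ for the holomorphic disc over $\psi\in S^1$, so that $\partial D_\psi = \iota(\{\psi\}\times S^1)$ and these boundary circles foliate $L\cong T^2$; in particular the $D_\psi$ are pairwise distinct. Each $D_\psi$ is a non-constant $J$-holomorphic immersed disc with boundary on $L$, and it is somewhere injective --- a $k$-fold multiple cover would force its boundary to wrap $k$ times around an immersed loop, whereas $\partial D_\psi$ is traversed exactly once by $S^1$. Using that $\dim_\R M = 4$, positivity of intersections for $J$-holomorphic curves \cite{gromovpseudoholom} shows that distinct $D_\psi$ meet at most in finitely many interior points with positive multiplicity, and together with the smooth dependence on $\psi$ and the disjointness of the boundaries one deduces that $N := F(S^1\times\overline{\D})$ is an immersed solid torus foliated by holomorphic discs with $\partial N = L$; this is the ``rigidity'' supplied by the holomorphic curve side. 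For the rest of the argument what matters is only that each $D_\psi$ is a non-constant somewhere injective holomorphic disc and that $\psi\mapsto [D_\psi]$ is an injective continuous path of unparametrised holomorphic discs with boundary on $L$.

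The heart of the proof is to read off the Maslov indices. By the Cieliebak--Goldstein relation \cite{cielgold} between the mean curvature one-form $\sigma_H$ of a Lagrangian immersion, its Maslov class and the ambient Ricci form, in a K\"ahler--Einstein manifold with Einstein constant $c$ --- so that $\Ric\le 0$ forces $c\le 0$ --- one has, for every holomorphic disc $u\colon(\overline{\D},\partial\D)\to(M,L)$,
$$
\pi\,\mu(u) \;=\; \int_{\partial\D} u^*\sigma_H \;+\; c\int_{\D} u^*\omega .
$$
Since $L$ is minimal, $\sigma_H\equiv 0$; since $D_\psi$ is holomorphic and non-constant, $\int_{D_\psi}\omega = \vol(D_\psi) > 0$; and $c\le 0$. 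Hence $\mu(D_\psi) = \tfrac{c}{\pi}\,\vol(D_\psi)\le 0$ for every $\psi$. On the other hand, $\psi\mapsto[D_\psi]$ is a non-constant path in the moduli space $\mathcal{M}$ of unparametrised holomorphic discs with boundary on $L$, so $\dim\mathcal{M}\ge 1$ near each $[D_\psi]$. Whenever automatic transversality for immersed somewhere injective discs in a $4$-manifold applies, $\mathcal{M}$ is, near $[D_\psi]$, a smooth manifold of dimension $\mathrm{ind}(D_\psi)-3 = (\dim L + \mu(D_\psi))-3 = \mu(D_\psi)-1$ (subtracting $\dim\mathrm{PSL}(2,\R) = 3$), so the existence of the one-parameter family forces $\mu(D_\psi)\ge 2$ --- contradicting the Maslov computation.

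The delicate case --- and the step I expect to be the real obstacle --- is when the Maslov index of the discs is so negative that automatic transversality is unavailable: then $\mathrm{ind}(D_\psi) = \dim L + \mu(D_\psi)$ is non-positive, the discs ``ought not to exist'' at all, yet this cannot be obtained from a transversality or genericity argument since the complex structure $J$ is fixed and integrable. This is precisely where I would bring in the recent convexity of the perturbed volume functional: in a K\"ahler--Einstein ambient with $\Ric\le 0$, a minimal Lagrangian is a strict minimum of the perturbed volume within the appropriate class of competitors, while the disc foliation of $N$ furnishes a deformation of $L$ --- shrinking the discs towards the core circle of $N$ --- along which the perturbed volume must strictly decrease, the sought contradiction. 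The work will lie in this matching: verifying that shrinking the disc foliation yields an \emph{admissible} path for the functional, that its first variation at $L$ vanishes by minimality, and that the functional genuinely degenerates at the core; once these are in place the convexity closes the argument, the holomorphic curve input of the first two paragraphs being otherwise routine.
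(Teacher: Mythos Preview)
Your Maslov computation via the Cieliebak--Goldstein relation is exactly right and matches the paper: for any holomorphic disc $u$ with boundary on $L$ one gets $\mu(u^*TM,u^*TL)\le 0$. But from this point on you make the problem much harder than it is. The paper's proof of this theorem (Proposition~\ref{prop rigidity}) avoids any appeal to automatic transversality or regularity of the moduli space. The key observation you are missing is that in complex dimension $2$ the \emph{normal} bundle $V=u^*(TM/T\D)$ of an immersed holomorphic disc has complex rank $1$, and its Maslov index is
\[
\mu(V,V_0)=\mu(u^*TM,u^*TL)-\mu(T\D,T\partial\D)\le 0-2=-2<0.
\]
For a rank-$1$ bundle pair with negative Maslov index the Riemann--Hilbert problem has only the zero solution: the $\bar\partial$ operator on $W^{k,q}_{V_0}(\D,V)$ is \emph{injective} (McDuff--Salamon, Theorem~C.1.10(iii)). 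Since $\ker\bar\partial$ is exactly the space of infinitesimal transversal holomorphic deformations of $u$, no $1$-parameter family of discs transverse to $u$ can exist, and a filling $F$ would provide one. That is the entire argument; your first paragraph on positivity of intersections and somewhere-injectivity is not needed.

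Your dichotomy ``automatic transversality holds / delicate case'' is therefore beside the point: the case you call easy is in fact vacuous here (the normal Maslov index is always $\le -2$, so Hofer--Lizan--Sikorav transversality never applies), and the case you call delicate is handled not by the $J$-volume but by the elementary injectivity statement above. Your proposed fallback via convexity of the perturbed volume is not wrong --- it is precisely the mechanism the paper uses in the general $n$-dimensional Theorem~\ref{main thm}, and in dimension $4$ a solid-torus filling automatically has complex rank $1$, so that argument would go through --- but for the present $4$-dimensional statement it is unnecessary machinery. The moral: in dimension $4$ pass to the normal bundle and use that rank $1$ plus negative Maslov index forces injectivity, not surjectivity, of the linearised operator.
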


It does not seem possible 
 to obtain an analogous result in higher dimensions using these techniques. To generalize Theorem \ref{solid torus} we thus go back to dimension $1$ to prove a new convexity result for the length functional, cf. Section \ref{length}. In order to illustrate its use, we give an alternative proof of the non existence of holomorphic discs bounded by a closed geodesic on a non positively curved Riemann surface. Our main interest in the convexity result however is that it admits a generalization to higher dimensions, explained in Section \ref{j vol}. This is based on the theory of a perturbed volume functional: the \textit{$J$-volume}, introduced in \cite{borrelli} and further studied in \cite{LP} and in \cite{LP1}. Thanks to those results, in Section \ref{disc filling} we prove our main result which roughly speaking states:
\begin{teo}
	Minimal Lagrangians in non-positive K\"{a}hler-Einstein manifolds do not admit fillings by holomorphic discs in any dimension.
\end{teo}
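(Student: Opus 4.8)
The plan is to reduce the higher-dimensional statement to a one-dimensional convexity phenomenon by slicing the putative filling $F\colon L \times \overline{\D} \to M$ along the disc factor, exactly as in the $n=1$ case of Theorem~\ref{solid torus}, but using the $J$-volume in place of the ordinary volume. First I would set up the geometry: assume for contradiction that $F$ is a smooth filling with $F|_{L\times\partial\D}=\iota$ a minimal Lagrangian immersion and each slice $F|_{\{p\}\times\D}$ a holomorphic immersion. Consider the function $r\mapsto \mathcal{V}_J(L_r)$, where $L_r = F(L\times\{|z|=r\})$ and $\mathcal{V}_J$ is the $J$-volume functional of \cite{borrelli,LP,LP1}. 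The key inputs from the cited work are: (a) the $J$-volume of a Lagrangian agrees with its Riemannian volume, and in general $\mathcal{V}_J \le \Vol$ with equality iff the submanifold is Lagrangian; (b) in a non-positive K\"ahler–Einstein ambient space, minimal Lagrangians are \emph{critical} for $\mathcal{V}_J$ and the second variation of $\mathcal{V}_J$ at such a submanifold is non-negative (this is the convexity result advertised in Section~\ref{j vol}); more precisely, $\mathcal{V}_J$ restricted to a suitable family through a minimal Lagrangian is convex.

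The core of the argument is then a monotonicity/convexity comparison for the one-parameter family $L_r$, $r\in(0,1]$. On one hand, because each $F|_{\{p\}\times\D}$ is holomorphic and $M$ is K\"ahler, the standard Wirtinger/subharmonicity argument (the one already used in Section~\ref{section nonexistence of discs}) shows that the map $r\mapsto \mathcal{V}_J(L_r)$, or an appropriate $J$-calibrated modification of the area swept by the holomorphic slices, is monotone and forces $\mathcal{V}_J(L_r)\to 0$ as $r\to 0$ — the inner "boundary circles" of the holomorphic discs degenerate, so the swept $J$-volume vanishes in the limit. On the other hand, the convexity of $\mathcal{V}_J$ near the minimal Lagrangian $L=L_1$, together with $L$ being a critical point, means $\mathcal{V}_J(L_r)\ge \mathcal{V}_J(L)=\Vol(L)>0$ for $r$ close to $1$, or at least that $\mathcal{V}_J$ cannot decrease below $\Vol(L)$ along a deformation that stays (infinitesimally) transverse to the Lagrangian condition in the way the holomorphic slicing dictates. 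Combining the two statements along the whole interval $r\in(0,1]$ produces the contradiction $0 = \lim_{r\to 0}\mathcal{V}_J(L_r) \ge \Vol(L) > 0$.

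To make the middle step rigorous I would argue as follows: differentiate $r\mapsto \mathcal{V}_J(L_r)$ and identify the variation field as the radial part of $\partial F/\partial r$; the holomorphicity of the slices means this field is, up to the $J$-action, the tangent to the discs, so the first variation of $\mathcal{V}_J$ picks up a term controlled by the mean curvature of $L_r$ plus a term coming from the failure of $L_r$ to be Lagrangian (the "Maslov-type" term), and the K\"ahler–Einstein, $\Ric\le 0$ hypothesis is what makes the relevant combination have a sign. The endpoint $r=1$ is the minimal Lagrangian, where both the mean curvature term and the Lagrangian-defect term vanish and the second-order term is $\ge 0$ by the convexity theorem of Section~\ref{j vol}; this pins the sign of $\tfrac{d}{dr}\mathcal{V}_J(L_r)$ near $r=1$ and, propagating inward via the holomorphic monotonicity, along all of $(0,1]$.

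The main obstacle I anticipate is precisely the interface between the two techniques at intermediate radii: the slices $L_r$ for $0<r<1$ are neither minimal nor Lagrangian, so neither the calibration argument (which wants holomorphic or Lagrangian objects) nor the convexity argument (which is a statement about a neighbourhood of the \emph{minimal Lagrangian}) applies off the shelf. One must check that the $J$-volume of the non-Lagrangian slices is still comparable to the swept area of the holomorphic discs with the right inequality direction, i.e. that the $J$-calibration bound $\mathcal{V}_J(L_r)\le \Vol(L_r)$ from \cite{LP} can be combined with the Wirtinger bound on the holomorphic discs without losing the sign — and that the convexity of $\mathcal{V}_J$ is robust enough (a global, not merely infinitesimal, statement, as established in \cite{LP1}) to be evaluated along the entire family rather than just at $L$. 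Handling the immersed (as opposed to embedded) case and possible self-intersections of $F$ should be routine once the variational identities are written invariantly on $L\times\overline{\D}$.
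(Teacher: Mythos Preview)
Your overall strategy matches the paper's: study the one-parameter family $L_r$ (equivalently $\iota_t$ with $t=-\log r$), use that $\iota_0=\iota$ is a critical point of the $J$-volume, that $\Vol_J(\iota_t)\to 0$ as $t\to\infty$, and derive a contradiction from convexity of $\Vol_J$ along the family. However, the precise mechanism that removes your ``main obstacle'' is missing, and without it your argument does not close.

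The point you are lacking is that the convexity statement from \cite{LP,LP1} is not a local second-variation fact near the minimal Lagrangian, nor a generic comparison with Wirtinger/subharmonicity; it is convexity of $\Vol_J$ along \emph{geodesics in the space $\mathcal{T}$ of totally real immersions}, in the sense of Definition~\ref{special curve}: curves $\{\iota_t\}$ satisfying $\partial_t\iota_t = J\bigl(d\iota_t(X)\bigr)$ for a fixed $X\in\Lambda^0(TL)$. The key observation, which you do not make, is that after the reparametrization $r=e^{-t}$ the family produced by the filling is exactly such a geodesic with $X=\partial_\theta$, because on each fibre the equation $\partial_t\iota' = J(\partial_\theta\iota')$ is precisely the Cauchy--Riemann equation for the holomorphic disc. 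This single remark replaces your entire attempt to patch together ``holomorphic monotonicity'' and ``local convexity'' at intermediate radii: Theorem~\ref{convexity} then gives global convexity of $t\mapsto\Vol_J(\iota_t)$ on $[0,\infty)$, and together with $\Vol_J'(0)=0$, $\Vol_J(0)>0$, $\Vol_J(t)\to 0$ one obtains the contradiction directly. No Wirtinger bound or sign-tracking of mean-curvature/Maslov terms is needed.

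A second omission is that for $\Vol_J$ to be defined along the whole family you need each $\iota_t(L)$ to be totally real. This is not automatic and is exactly why the paper imposes the \emph{complex rank~1} hypothesis on the filling (Definition~\ref{def: rank1}): if the only complex line in $T_pN$ is the one tangent to the disc, then the circle-bundle slices carry no complex line and are totally real. Your write-up should include this check (or an equivalent hypothesis), otherwise the geodesic in $\mathcal{T}$ is not well defined for $0<r<1$.
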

We refer to Theorem \ref{main thm} for the precise statement, which requires a few preliminary definitions (cf. Definition \ref{def: filling } and Definition \ref{def: rank1}). More generally, the same is true for the class of totally real \textit{$J$-minimal} submanifolds, which generalize minimal Lagrangians to ambient spaces which are not necessarily K\"ahler-Einstein.

In Section \ref{example}, following a technique used in \cite{bryant}, we exhibit two examples of totally geodesic (thus minimal) Lagrangian tori in a (non-product) 2-dimensional negative K\"{a}hler-Einstein manifold. Using the perturbation techniques of \cite{LP2} these generate many more examples of minimal Lagrangian and $J$-minimal tori (not necessarily totally geodesic) to which one can apply our results.

\medskip 
\textbf{Acknowledgements} This work is part of my PhD thesis at the University of Parma. I am grateful to my advisor Tommaso Pacini for encouraging me to study the problem of filling minimal Lagrangians and for invaluable discussions and suggestions. I would like to thank also Jonny Evans, Jason Lotay, Luciano Mari and Kai Zehmisch for useful discussions, as well as Robert Bryant and Claude LeBrun who indirectly helped me via conversations with Tommaso Pacini.
Finally I would like to thank the anonymous referee for interesting suggestions and several improvements, especially in Section \ref{section nonexistence of discs}.

\section{Non existence of holomorphic discs} \label{section nonexistence of discs}

	In this section we present some non-existence results for holomorphic discs in K\"{a}hler manifolds with non-positive sectional curvature. These techniques apply in two situations, depending on the specific boundary condition for the holomorphic disc.
	
	\subsection{Totally geodesic submanifolds}
	The first case involves totally geodesic submanifolds; from now on we will denote with $\D$ the open unit disc. 
	
	\begin{teo} \label{teo tot geodesic}
	Let $M$ be a K\"{a}hler manifold with non-positive sectional curvature and let $L$ be a totally geodesic Lagrangian submanifold. Then there is no non-constant smooth disc $u: \overline{\mathbb{D}} \rightarrow M$, holomorphic on $\mathbb{D}$ and with $u \tonde{\partial \mathbb{D}}\subseteq L$.
	\end{teo}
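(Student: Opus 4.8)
\noindent\emph{Proof proposal.} The plan is to produce a non-negative plurisubharmonic function defined near $L$ which vanishes exactly along $L$, and to combine it with the maximum principle. Set $r := \dist(\cdot, L)$ and $\rho := r^2$. On a tubular neighbourhood $U$ of $L$ the function $\rho$ is smooth, and I claim it is plurisubharmonic there. Indeed, since $L$ is totally geodesic and $M$ has non-positive sectional curvature, the standard Jacobi-field (Riccati) comparison for distance tubes gives $\Hess r \ge 0$ on the directions tangent to the tubes $\{r = \mathrm{const}\}$ — equivalently, $L$ is locally convex and $r$ is a convex function, with no focal points. Hence $\Hess \rho = 2\,\de r \otimes \de r + 2 r\, \Hess r \ge 0$ as a bilinear form on $\T M$ over $U$; and because $M$ is K\"{a}hler this Riemannian convexity (of $\rho$) implies $\sqrt{-1}\,\partial\bar\partial\rho \ge 0$, i.e. $\rho$ is plurisubharmonic on $U$.

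Suppose now $u\colon \overline{\D}\to M$ is smooth, holomorphic on $\D$, with $u(\partial\D)\subseteq L$, and assume first that $u(\overline{\D})\subseteq U$. Then $\varphi := \rho\circ u$ is subharmonic on $\D$ (pullback of a plurisubharmonic function by a holomorphic map), continuous and non-negative on $\overline{\D}$, and identically zero on $\partial\D$. By the maximum principle $\varphi\le 0$, hence $\varphi\equiv 0$, i.e. $u(\overline{\D})\subseteq L$. But $L$ is totally real: from $\de u_z\circ i = J\circ \de u_z$ and $\de u_z(\T_z\D)\subseteq \T_{u(z)}L$ one gets $\de u_z(\T_z\D)\subseteq \T_{u(z)}L\cap J(\T_{u(z)}L)=\graffe{0}$, so $\de u\equiv 0$ and $u$ is constant on the connected set $\overline{\D}$.

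The main obstacle is removing the auxiliary hypothesis $u(\overline{\D})\subseteq U$: globally the distance to $L$ need not be smooth, and this is exactly where non-positivity of the curvature has to be used in an essential way. One clean way around it uses the minimal-surface viewpoint. Away from the isolated critical points of $u$, the image $u(\D)$ is a minimal surface in $M$ (every holomorphic curve is minimal), and the two linear-algebra facts $\T(u(\partial\D))\subset \T L$ and $J\cdot \T(u(\partial\D))\subset J(\T L)=\nu L$ show that $u(\D)$ meets $L$ \emph{orthogonally} along $\partial\D$; moreover the boundary has vanishing geodesic curvature inside $u(\D)$, since the $M$-acceleration of a curve lying in the totally geodesic $L$ stays tangent to $L$ while the in-surface normal $J\dot\gamma$ is normal to $L$. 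Gauss--Bonnet on $(\overline{\D}, u^*g)$ — with the usual correction terms at the finitely many branch points, which only strengthen the conclusion — then reads $2\pi \le \int_{\D} K_{u^*g}\,\de A + \int_{\partial\D}\kappa_g\,\de s$, whereas the Gauss equation for a minimal surface gives $K_{u^*g}\le K_M\le 0$ and we have just seen $\kappa_g\equiv 0$; this forces $\operatorname{Area}(u(\overline{\D}))=0$, i.e. $u$ constant. Alternatively, in the spirit of the first argument, one passes to the universal cover $\widetilde{M}$ — a Cartan--Hadamard manifold when $M$ is complete — where $\dist(\cdot,\widetilde{L})^2$ is globally smooth, proper and plurisubharmonic, lifts $u$ (using that $\D$ is simply connected) and applies the global maximum principle there.
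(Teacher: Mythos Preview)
Your argument is correct, but it proceeds along lines quite different from the paper's. The paper never constructs a plurisubharmonic distance function and never invokes Gauss--Bonnet. Instead it observes (via a Bochner identity, \cite{mcduffsalamon} eq.~(4.3.7)) that the energy density $\modulo{du}^2$ is subharmonic when the sectional curvature is non-positive, transplants $u$ to the upper half-plane by $z\mapsto\tfrac{z-i}{z+i}$, and then uses the reflection/mean-value inequality of \cite{mcduffsalamon}, Lemma~4.3.1 --- which is where the \emph{totally geodesic Lagrangian} hypothesis enters --- to bound $\sup_{B_r\cap\mathbb H}\modulo{dv}^2$ by $C r^{-2}$ times the total energy; since the latter is finite (smoothness up to $\partial\D$), letting $r\to\infty$ forces $dv\equiv 0$. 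So the paper's mechanism is ``subharmonic energy density $+$ Schwarz reflection $+$ Liouville'', whereas yours is ``maximum principle for a psh exhaustion'' or, alternatively, ``intrinsic Gauss--Bonnet on a minimal surface with $K\le 0$ and $\kappa_g=0$''. Your Gauss--Bonnet route is pleasantly elementary and, like the paper's proof, does not require completeness of $M$; your Cartan--Hadamard route is the cleanest conceptually but does need $M$ complete, which the statement does not assume. The paper's approach, by contrast, stays entirely within the $J$-holomorphic-curve toolkit and uses the boundary condition only through the local reflection lemma. One small point: in your first paragraph you should not describe $\Hess\rho\ge 0$ as a local property near $L$ only --- it holds wherever $r$ is smooth, and the obstruction you identify is precisely the cut/focal locus, which is what the passage to the universal cover removes.
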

\begin{proof}
	Assume such $u$ does exist. Using variables $(s,t)\in\D$, set $\zeta= \partial_s u$ and $\eta= \partial_t u$. Since $M$ has an integrable structure, equation $(4.3.7)$ in \cite{mcduffsalamon} gives that
	\[
	\Delta \tonde{\modulo{du}^2}= 4 \tonde{\modulo{\nabla_s \zeta}^2+\modulo{\nabla_t \zeta}^2 - \left < R \tonde{\zeta, \eta} \eta, \zeta \right >} \geq 0,
	\]
	thus $\modulo{du}^2$ is subharmonic, because $M$ has non-positive sectional curvature.
	
	Let $\Psi(z)=\frac{z-i}{z+i}$ denote the standard biholomorphism between the upper half plane $\mathbb{H}$ and the disc $\D$, and set $v:= u \circ \Psi:  \mathbb{H} \longrightarrow M$.
The function $\modulo{dv}^2$ is again subharmonic and non-constant.

	By Lemma $4.3.1$ in \cite{mcduffsalamon} (which relies on reflection arguments for totally geodesic Lagrangian submanifolds) it holds that, if $r>0$,
	\[
	\sup_{\mathbb{H} \cap B_r} \modulo{dv}^2 \leq \frac{8}{\pi r^2} 	\cdot \int_{\mathbb{H} \cap B_{2r}} \modulo{dv}^2 dx \, dy.
	\]
Moreover, by the change of variables formula,	
	\[
	\int_{\mathbb{H}} \modulo{dv}^2 dx \, dy= \int_{\mathbb{H}} \modulo{d \tonde{u \circ \Psi}}^2 dx \, dy = \int_{\D} \modulo{du}^2 ds \, dt < + \infty,
	\] 
	because $u$ extends up to the boundary of the disc $\D$.

	We thus have that
	 $\int_{\mathbb{H} \cap B_{2r}} \modulo{dv}^2$ is uniformly bounded for any $r>0$, so 
	\[
	\sup_{\mathbb{H} } \modulo{dv}^2 \leq \lim_{r \rightarrow +\infty} \sup_{\mathbb{H} \cap B_r} \modulo{dv}^2 = 0,
	\]
	hence $dv=0$, giving a contradiction.
	
\end{proof}

\subsection{Geodesic boundary}
In the second case we impose a geodesic boundary condition.

\begin{teo} \label{teo: geodesic boundary}
Let $M$ be a K\"{a}hler manifold with non-positive sectional curvature and let $\gamma: S^1 \rightarrow M$ a closed geodesic. Then there is no non-constant smooth disc $u: \overline{\mathbb{D}} \rightarrow M$, holomorphic on $\D$, with $u \left|_{\partial \mathbb{D} }\right. = \gamma$.
\end{teo}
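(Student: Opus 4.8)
The plan is to run the argument from the proof of Theorem~\ref{teo tot geodesic}, keeping verbatim every step that does not use the Lagrangian nature of the boundary and replacing the single step that does. Suppose, for contradiction, that a non-constant such $u$ exists. As before, set $v:=u\circ\Psi:\mathbb{H}\to M$; then $v$ is holomorphic on $\mathbb{H}$, non-constant, smooth up to $\partial\mathbb{H}=\R$, carries $\R$ into the image of $\gamma$, and has finite energy $\int_{\mathbb{H}}\modulo{dv}^2\,dx\,dy=\int_{\D}\modulo{du}^2\,ds\,dt<+\infty$. Writing $\zeta=\partial_x v$ and $\eta=\partial_y v=J\zeta$, equation $(4.3.7)$ of \cite{mcduffsalamon} together with the non-positivity of the sectional curvature shows, exactly as in Theorem~\ref{teo tot geodesic}, that $f:=\modulo{dv}^2=2\modulo{\zeta}^2$ is subharmonic on $\mathbb{H}$.

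The only ingredient that must be re-established is the mean value inequality up to the boundary; in Theorem~\ref{teo tot geodesic} this rested on a reflection across a totally geodesic \emph{Lagrangian}, which is not available here. Instead I claim that the normal derivative of $f$ vanishes along $\R$, so that the even reflection of $f$ across $\R$ is of class $C^{1,1}$ and still satisfies $\Delta\geq 0$ in the distributional sense; the classical interior sub-mean-value inequality then gives
\[
\sup_{\mathbb{H}\cap B_r} f \;\leq\; \frac{C}{r^2}\int_{\mathbb{H}\cap B_{2r}} f\,dx\,dy
\]
for a universal constant $C$ and all $r>0$. Granting the claim, the proof finishes word for word as in Theorem~\ref{teo tot geodesic}: the right-hand side is at most $Cr^{-2}$ times the (finite) total energy, hence tends to $0$ as $r\to+\infty$, forcing $f\equiv 0$, i.e. $dv\equiv 0$, contradicting that $u$ is non-constant.

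It remains to prove the vanishing of $\partial_y f$ on $\R$, which is the heart — and the only genuinely new point — of the proof. Using $f=2\,g(\zeta,\zeta)$, the identity $\nabla_x\partial_y v=\nabla_y\partial_x v$, the K\"ahler condition $\nabla J=0$, and the compatibility relation in the form $g(J\,\cdot\,,\,\cdot\,)=\omega(\cdot\,,\,\cdot\,)$, one computes
\[
\partial_y f \;=\; 4\,g(\nabla_y\zeta,\zeta) \;=\; 4\,g\!\left(J\nabla_x\zeta,\zeta\right) \;=\; 4\,\omega(\nabla_x\zeta,\zeta).
\]
Now restrict to $\{y=0\}$: there $x\mapsto v(x,0)$ is a curve lying in $\gamma$, so $\zeta=\partial_x v$ is tangent to $\gamma$, and since $\gamma$ is a geodesic of $M$ — hence a totally geodesic submanifold, with vanishing second fundamental form — the ambient acceleration $\nabla_x\zeta$ of this curve is again tangent to $\gamma$. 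But $\gamma$ is one-dimensional, so $\nabla_x\zeta$ and $\zeta$ are proportional along $\R$, whence $\omega(\nabla_x\zeta,\zeta)=0$ there by antisymmetry of $\omega$; thus $\partial_y f|_{\R}=0$, as needed. I expect the only real obstacle to be locating this observation: once one exploits that a geodesic is both totally geodesic (so accelerations of curves in it stay tangent) and one-dimensional (so $\omega$ annihilates any tangent pair), the Lagrangian reflection lemma becomes unnecessary and the estimate above goes through.
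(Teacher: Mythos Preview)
Your proof is correct, but it takes a genuinely different route from the paper's. The paper never passes to the half-plane: it stays on $\overline{\D}$, lets $h$ be the harmonic majorant of the subharmonic function $\modulo{du}^2$ with $h=\modulo{du}^2$ on $\partial\D$, and looks at the radial energy $E(r)=\int_0^{2\pi}\modulo{\partial_\theta u}^2\,d\theta=\tfrac{r^2}{2}\int_0^{2\pi}\modulo{du}^2(re^{i\theta})\,d\theta$. The mean value property of $h$ gives $E(r)\le \pi h(0)\,r^2$ with equality at $r=1$; since $u|_{\partial\D}=\gamma$ is a (parametrized) geodesic, $r=1$ is a critical point of $E$, and these two facts are incompatible unless $u$ is constant.

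What your approach buys is a unification with Theorem~\ref{teo tot geodesic}: you isolate precisely how the geodesic boundary replaces the Lagrangian reflection, namely via the Neumann condition $\partial_y\modulo{dv}^2|_{\R}=0$, after which the argument is literally the same. Your computation $\partial_y f=4\,\omega(\nabla_x\zeta,\zeta)$ and the observation that $\nabla_x\zeta$ and $\zeta$ are collinear along $\R$ (because a geodesic is a one-dimensional totally geodesic submanifold) are clean and correct; in fact the even reflection is then $C^2$, not merely $C^{1,1}$, so classical subharmonicity suffices. A small bonus is that your argument uses only that $v(\R)$ lies in the image of a closed geodesic, whereas the paper's energy argument genuinely needs $u|_{\partial\D}=\gamma$ as a parametrized map (reparametrized geodesics are not energy-critical). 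Conversely, the paper's proof is shorter, avoids the half-plane transfer and reflection entirely, and uses the variational meaning of ``geodesic'' in the most direct possible way.
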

\begin{proof}
	Suppose by contradiction that such a disc $u: \overline{\mathbb{D}} \rightarrow M$  does exist.

	Since the function $\modulo{du}^2$ is subharmonic as seen above, harmonic function theory shows that there exists a harmonic function $h$ on $\D$ such that 
	\[
	\left \{
	\begin{array}{cc}
	 \modulo{du}^2 \leq h & \mbox{ on } \D,\\ 
	\modulo{du}^2=h & \mbox{ on } \partial \D.
	\end{array} 
	\right. 
	\]
	Let us consider the energy functional 
	\[
	E(r):= \int_{0}^{2 \pi} \left. \modulo{\frac{\partial u}{\partial \theta}}^2 \right|_{r e^{i \theta}} d \theta. 
	\]
	Since 
	\[
	\modulo{du}^2= \frac{2}{r^2} \modulo{\frac{\partial u}{\partial \theta}}^2,
	\]
	by the mean value inequality we have that
	\[
	E(r)= \frac{\, r^2}{2} \cdot \int_{0}^{2 \pi}  \modulo{du}^2\tonde{r e^{i \theta}} d \theta \leq \frac{\, r^2}{2} \cdot \int_{0}^{2 \pi}  h\tonde{r e^{i \theta}} d \theta = h(0) \cdot  \pi r^3.
	\]
	Since geodesics are critical points of the energy functional, we would have that the energy functional $E:  \quadre{0,1} \longrightarrow \R$ has a critical point in $r=1$. However, we have shown that
	 $E(r) \leq C \cdot r^3$, with equality for $r=1$: this gives a contradiction.

\end{proof}

	 \subsection{A special case: products} \label{products}
	The simplest setting in which we can study minimal Lagrangian submanifolds is that of products.
	Let $\Sigma^1, \ldots, \Sigma^n$ be Riemann surfaces endowed with K\"ahler metrics and let $\gamma_i: S^1 \rightarrow \Sigma^i$ be closed geodesics, with $i=1, \ldots, n$. Then the product manifold $M= \Sigma^1 \times  \cdots \times \Sigma^n$ inherits a K\"{a}hler structure with respect to which the submanifold $L= \gamma_1 \times \cdots \times \gamma_n$ is minimal Lagrangian. The ambient structure is K\"ahler--Einstein if the curvature is uniformly constant, for all surfaces. For example, let $\Sigma_2$ be the genus $2$ surface with its standard hyperbolic structure and let $\gamma: S^1 \rightarrow \Sigma_2$ be the closed geodesic of Figure \ref{fig:surfaces}.
	Then the submanifold $L:= \gamma \times \gamma: S^1 \times S^1 \longrightarrow M:= \Sigma_2 \times \Sigma_2$ is a homologically trivial minimal Lagrangian torus, since it is the boundary of the $3$-manifold $N \times \gamma $, where $N$ is the handle bounded by $\gamma$.
	
	We want to show that in this case such minimal Lagrangians do not bound holomorphic discs.

	\begin{cor} \label{topological filling}
		Let $\Sigma^1, \ldots, \Sigma^n$ be Riemann surfaces with nonpositive scalar curvature and let $\gamma_j: S^1 \rightarrow \Sigma^j$ be  closed geodesics, for $j=1, \ldots, n$. Denoting with $M$ the product space $\Sigma^1 \times \ldots \times \Sigma^n$ and with $L$ the submanifold $\gamma_1 \times \cdots \times \gamma_n$, there does not exist any nonconstant continuous map $f: \overline{\D} \longrightarrow M $, holomorphic on $\D$, such that $f \tonde{\partial \D} \subseteq L$. 
	\end{cor}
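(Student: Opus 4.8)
The plan is to handle the statement one component at a time and to move the analysis to the universal cover, where a closed geodesic unfolds into a genuine totally geodesic line. Writing $f = (f_1,\dots,f_n)$ with $f_j\colon\overline{\D}\to\Sigma^j$ continuous and holomorphic on $\D$, the boundary condition $f(\partial\D)\subseteq L$ says exactly that $f_j(\partial\D)\subseteq\gamma_j(S^1)$ for each $j$. If $f$ is nonconstant, some component, say $f_1$, is nonconstant, so it suffices to show that a nonconstant holomorphic disc whose boundary lies in the image of a closed geodesic of a complete nonpositively curved Riemann surface cannot exist. This looks like Theorem~\ref{teo: geodesic boundary}, but with two weakenings: $f_1|_{\partial\D}$ need not parametrize $\gamma_1$, only land in its image, and $f_1$ is only continuous up to $\partial\D$. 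Both are absorbed by passing to the universal cover.

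Concretely, I would take the universal covering $\pi\colon\widetilde\Sigma^1\to\Sigma^1$; since $\Sigma^1$ is complete (it is compact in the cases of interest) and nonpositively curved, $\widetilde\Sigma^1$ is a Cartan--Hadamard manifold, and it inherits a K\"ahler structure so that $\pi$ is both a local isometry and a local biholomorphism. Because $\overline{\D}$ is simply connected, $f_1$ lifts to $\widetilde f_1\colon\overline{\D}\to\widetilde\Sigma^1$, holomorphic on $\D$, with $\widetilde f_1(\partial\D)\subseteq\pi^{-1}(\gamma_1(S^1))$. When $\gamma_1$ is embedded, this preimage is a disjoint union of complete geodesics of $\widetilde\Sigma^1$, so connectedness of $\partial\D$ gives $\widetilde f_1(\partial\D)\subseteq\ell$ for a single complete geodesic $\ell$. (For a non-embedded $\gamma_1$ the preimage is a locally finite union of pairwise transverse geodesics; one first restricts to the finite subcomplex met by the compact set $\widetilde f_1(\overline{\D})$ — this bookkeeping is the one point I would want to verify carefully.)

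Now comes the key step. A complete geodesic in a Cartan--Hadamard manifold is a convex subset, so $\rho:=\dist(\,\cdot\,,\ell)$ is a convex function on $\widetilde\Sigma^1$, smooth on $\widetilde\Sigma^1\setminus\ell$, nonnegative, and vanishing precisely on $\ell$. Since a holomorphic map from a Riemann surface into a K\"ahler manifold is harmonic, $\rho\circ\widetilde f_1$ is subharmonic on $\D$: on the open set where it is positive one has $\Delta(\rho\circ\widetilde f_1)=\tr\big(\Hess\rho(d\widetilde f_1,d\widetilde f_1)\big)\ge 0$, and at its zeros the sub-mean-value inequality is automatic because $\rho\circ\widetilde f_1\ge 0$. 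This function is continuous on $\overline{\D}$ and identically $0$ on $\partial\D$, so the maximum principle forces $\rho\circ\widetilde f_1\equiv 0$, i.e.\ $\widetilde f_1(\overline{\D})\subseteq\ell$. A nonconstant holomorphic disc cannot take values in the totally real curve $\ell$ (straightening $\ell$ to $\R$ in a holomorphic chart, the imaginary part of $\widetilde f_1$ is a vanishing harmonic function, so $\widetilde f_1$ is locally constant, hence constant on the connected $\D$). Thus $f_1=\pi\circ\widetilde f_1$ is constant, a contradiction; applying this to each component shows $f$ is constant.

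The main obstacle is the weak regularity hypothesis: because $f$ is assumed only continuous up to $\partial\D$ — in particular with possibly infinite energy — one cannot simply lift and quote Theorem~\ref{teo tot geodesic} or re-run the energy-functional argument of Theorem~\ref{teo: geodesic boundary}, both of which use that the disc extends smoothly. Routing the final contradiction through the subharmonicity of $\rho\circ\widetilde f_1$ and the maximum principle is what makes the argument robust to this, and it is also what replaces the reflection lemma for totally geodesic Lagrangians. The only other point needing attention is the combinatorics of $\pi^{-1}(\gamma_1(S^1))$ when $\gamma_1$ is not simple.
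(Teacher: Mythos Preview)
Your argument is correct (at least for simple closed geodesics $\gamma_j$, the natural setting in which $L$ is an embedded submanifold) and takes a genuinely different route from the paper. The paper gives a one-line proof: $L=\gamma_1\times\cdots\times\gamma_n$ is a totally geodesic Lagrangian in the product $M$, so the Corollary is a special case of Theorem~\ref{teo tot geodesic}. You instead decompose $f$ componentwise, lift each factor to its Cartan--Hadamard universal cover, and replace the reflection and energy estimate behind Theorem~\ref{teo tot geodesic} by the convexity of the distance to a complete geodesic together with the maximum principle. What this buys you is robustness under the weak regularity actually stated in the Corollary (continuity up to $\partial\D$ only): Theorem~\ref{teo tot geodesic} is formulated for \emph{smooth} discs, so the paper's reduction tacitly relies on automatic boundary regularity for holomorphic discs with totally real boundary, which it does not mention, whereas your route needs no such appeal. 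The price is the componentwise reduction and the bookkeeping you flag for non-simple $\gamma_j$ (an issue equally present for the paper's approach, since Theorem~\ref{teo tot geodesic} also wants $L$ to be a submanifold). A small simplification: working with $\rho^2$ instead of $\rho$ gives a function that is smooth across $\ell$ as well as convex, so subharmonicity follows directly from the chain rule without the separate treatment at the zero set.
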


Since $\gamma_1 \times \cdots \times \gamma_n$ is a totally geodesic submanifold in $M$, Corollary \ref{topological filling} is a particular case of Theorem \ref{teo tot geodesic}.

	 \section{Rigidity of holomorphic discs in dimension 2} \label{Rigidity}
We now want to show that, in complex dimension 2, minimal Lagrangians never admit holomorphic fillings. In this dimension we can achieve this using standard results from the theory of $J$-holomorphic curves.

	Let us consider an almost-complex manifold $M$, a totally real submanifold $L$ and an immersed holomorphic disc $u: \D \rightarrow M$ with $u \tonde{\partial \D} \subseteq L$. In this section we will assume that all the holomorphic discs are smooth up to the boundary. 
	
	Let us identify $\D $ with its image $ u \tonde{\D}$ and consider the normal bundle $V:=u^* \tonde{TM / T \D}$ over $\D$ and the totally real subbundle $V_0:= u^* \tonde{TL / T \partial \D}$ over $\partial \D$. We will denote by $W^{k,q}_{V_0} \tonde{\D, V }$ the closure of the space of vector fields
	\[
	\Lambda^0_{V_0} \tonde{\D, V }:= \graffe{ \xi \in \Lambda^0 \tonde{\D, V } \, | \, \xi \tonde{\partial \D} \subseteq V_0}
	\] 
	in the Sobolev space $W^{k,q} \tonde{\D, V }$. Let $W^{k,q}_{V_0} \tonde{\D, V }_{\varepsilon} \subseteq W^{k,q}_{V_0} \tonde{\D, V }$ denote the ball of radius $\varepsilon$ centred at the zero section.

	As in \cite{LeBrun}, there exists a differentiable map
	\[
	\mathcal{D}: W^{k,q}_{V_0} \tonde{\D, V }_{\varepsilon} \longrightarrow W^{k-1,q} \tonde{\Lambda^{0,1}\tonde{\D}, V },
	\]
	such that $\mathcal{D}^{-1} (0)$ consists of all holomorphic curves $\tonde{\D, \partial \D} \rightarrow \tonde{M, L}$ sufficently near $u$ and whose linearization $\mathcal{D}_0$ at $0$ is exactly the canonical operator
	\[
	\overline{\partial}: W^{k,q}_{V_0} \tonde{\D, V } \longrightarrow W^{k-1,q} \tonde{\Lambda^{0,1}\tonde{\D}, V }.
	\] 
	Thanks to this fact, it is possible to prove the rigidity of holomorphic discs in dimension $2$.
	
	\begin{prop} \label{prop rigidity}
		Let $L$ be a minimal Lagrangian submanifold in a nonpositive K\"{a}hler-Einstein manifold $M$ of complex dimension $2$. If $u: \D \rightarrow M$ is a holomorphic immersed disc, and $u \tonde{\partial \D} \subseteq L$, then $u$ is rigid, i.e. there is no transversal $1$-parameter family of holomorphic discs starting from $u$. 
	\end{prop}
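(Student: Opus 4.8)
The plan is to reduce the statement to the triviality of the kernel of the linearized operator $\overline{\partial}$ on the normal bundle $V$, and then to rule out a nonzero kernel element by a Maslov index computation.

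First I would translate the statement into functional-analytic terms. A transversal $1$-parameter family of holomorphic discs $\tonde{u_\tau}_{\tau\in(-\epsilon,\epsilon)}$ with $u_0=u$ corresponds, under the parametrization of nearby holomorphic curves by $\mathcal{D}^{-1}(0)$, to a nonconstant curve $\tau\mapsto\xi_\tau\in W^{k,q}_{V_0}\tonde{\D,V}_\varepsilon$ with $\xi_0=0$ and $\mathcal{D}\tonde{\xi_\tau}=0$; differentiating at $\tau=0$ produces $\xi:=\dot\xi_0\in\ker\mathcal{D}_0=\ker\overline{\partial}$, where $\overline{\partial}:W^{k,q}_{V_0}\tonde{\D,V}\to W^{k-1,q}\tonde{\Lambda^{0,1}\tonde{\D},V}$. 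Transversality is exactly the assertion that this family is not tangent to the reparametrization orbit of $u$, i.e. that $\xi$ is not identically zero. Hence it suffices to prove $\ker\overline{\partial}=0$. Note that, since $u$ is an immersion and $\dim_\C M=2$, the bundle $V=u^*\tonde{TM/T\D}$ is a \emph{complex line bundle} over $\overline{\D}$, and $V_0$ is a totally real line subbundle of $V|_{\partial\D}$.

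Next I would compute the boundary Maslov index $\mu\tonde{V,V_0}$. From the short exact sequences of bundle pairs
\[
0\to T\D\to u^*TM\to V\to 0 \text{ over }\D,\qquad 0\to T\partial\D\to\tonde{u|_{\partial\D}}^*TL\to V_0\to 0 \text{ over }\partial\D,
\]
additivity of the Maslov index gives $\mu\tonde{u^*TM,\tonde{u|_{\partial\D}}^*TL}=\mu\tonde{T\D,T\partial\D}+\mu\tonde{V,V_0}$. The tangent bundle pair of the disc has Maslov index $2$, so $\mu\tonde{V,V_0}=\mu(u)-2$, where $\mu(u)$ is the Maslov index of the holomorphic disc $u$. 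I would then invoke the formula expressing $\mu(u)$ in terms of the mean curvature $1$-form $\sigma_H$ of $L$ and the Ricci form of $M$ (cf. \cite{cielgold}): since $L$ is minimal, $\sigma_H\equiv 0$; since $M$ is K\"ahler--Einstein with $\Ric\le 0$, the Ricci form is $\lambda\omega$ with $\lambda\le 0$, so $\mu(u)$ is a nonpositive multiple of the positive area $\int_\D u^*\omega$ of the nonconstant disc. Hence $\mu(u)\le 0$ and $\mu\tonde{V,V_0}\le -2$.

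Finally I would appeal to the Riemann--Hilbert problem on the disc (cf. \cite{mcduffsalamon}): for a $\overline{\partial}$-operator on a complex line bundle over $\overline{\D}$ with a totally real boundary condition of Maslov index at most $-1$, the operator is injective. Since $\mu\tonde{V,V_0}\le -2\le -1$, this forces $\ker\overline{\partial}=0$, contradicting the existence of the nonzero $\xi$, and proving rigidity. The step I expect to be most delicate is the second one: fixing conventions so that Maslov additivity, the value $2$ for the tangent pair of the disc, and the sign in the mean-curvature/Ricci formula are mutually consistent; the functional-analytic reduction and the concluding Riemann--Hilbert input are standard once $V$ is recognized as a line bundle of sufficiently negative Maslov index.
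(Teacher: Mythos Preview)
Your proposal is correct and follows essentially the same route as the paper: both use Maslov additivity together with $\mu\tonde{T\D,T\partial\D}=2$ to reduce to $\mu\tonde{V,V_0}=\mu\tonde{u^*TM,u^*TL}-2$, invoke the monotonicity result of \cite{cielgold} to get $\mu\tonde{u^*TM,u^*TL}\le 0$, and conclude injectivity of $\mathcal{D}_0=\overline{\partial}$ from the rank-one Riemann--Hilbert theory (Theorem C.1.10(iii) in \cite{mcduffsalamon}). Your treatment is slightly more explicit about the functional-analytic reduction and the short exact sequence underlying additivity, but the argument is the same.
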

	\begin{proof}
Using the notation above, according to the axiomatic definition of the Maslov index in \cite{mcduffsalamon}, Appendix C,
 			\[
			\mu \tonde{V, V_0}= \mu \tonde{u^*TM, u^*TL} - \mu \tonde{T \D, T \partial \D},
 			\]
 			and $\mu \tonde{T \D, T \partial \D}=2$. 
Furthermore, \cite{cielgold} proves that each minimal Lagrangian submanifold $L$ in a K\"{a}hler-Einstein ambient $M$ is monotone; specifically, 
 			\[
 			\mu \tonde{u^*TM,u^* TL}= 2\lambda \int_{\D} \omega,
 			\]
 			where $\lambda $ is the K\"{a}hler-Einstein constant. Thus, in our case, $\mu \tonde{u^*TM,u^* TL}$ is nonpositive, so $ \mu \tonde{V,V_0} \leq -2$.
 			
 			Since the bundle $V$ has complex rank $1$ and $ \mu \tonde{V, V_0}$ is negative, we can apply Theorem C.1.10, (iii) in \cite{mcduffsalamon}, thus the operator $\mathcal{D}_0$ is injective. Since we are in dimension $4$ and
 				\[
 			\ker\mathcal{D}_0= \graffe{ \mbox{ infinitesimal transversal holomorphic deformations of } u},
 			\]
 			 the holomorphic disc does not admit any transversal holomorphic deformation.

	\end{proof}
	In particular, Theorem \ref{solid torus} follows.
	
	In arbitrary dimensions, the theory of $J$-holomorphic curves allows us to prove the same result only in the case of regular discs, i.e. discs for which $\mathcal{D}_0$ is surjective.
	
	\begin{prop}
		Let $L \subset M$ be a minimal Lagrangian submanifold in a nonpositive K\"{a}hler-Einstein ambient of complex dimension $n$. Then $L$ does not admit a filling by regular immersed holomorphic discs. 
	\end{prop}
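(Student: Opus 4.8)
The plan is to argue by contradiction, combining the index computation from the proof of Proposition~\ref{prop rigidity} with a dimension count forced by the filling. Suppose $L$ admits a filling by regular immersed holomorphic discs, and let $u\colon\D\to M$ be one of these discs. As in Proposition~\ref{prop rigidity}, set $V=u^*(TM/T\D)$ --- now of complex rank $n-1$ --- and $V_0=u^*(TL/T\partial\D)$, and consider the operator $\overline\partial=\mathcal{D}_0\colon W^{k,q}_{V_0}(\D,V)\to W^{k-1,q}(\Lambda^{0,1}(\D),V)$ together with the nonlinear map $\mathcal{D}$ whose zero set parametrizes, via normal graphs over $u$, the holomorphic discs near $u$ with boundary on $L$.

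First I would compute the Fredholm index of $\mathcal{D}_0$, exactly as in Proposition~\ref{prop rigidity} but keeping track of the rank. Additivity of the Maslov index under $u^*TM\cong T\D\oplus V$ gives $\mu(V,V_0)=\mu(u^*TM,u^*TL)-\mu(T\D,T\partial\D)=\mu(u^*TM,u^*TL)-2$; the monotonicity result of \cite{cielgold} gives $\mu(u^*TM,u^*TL)=2\lambda\int_\D\omega\le 0$, since $\lambda\le 0$ and $\int_\D\omega>0$ for the non-constant disc $u$; hence $\mu(V,V_0)\le -2$. By the Riemann--Roch formula for bundle pairs over the disc (Theorem~C.1.10 in \cite{mcduffsalamon}),
\[
\ind_\R\mathcal{D}_0=(n-1)\,\chi(\D)+\mu(V,V_0)\le (n-1)-2=n-3.
\]
Because $u$ is a \emph{regular} disc, $\mathcal{D}_0$ is surjective, so $\dim_\R\ker\mathcal{D}_0=\ind_\R\mathcal{D}_0\le n-3$.

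Next I would extract a competing lower bound from the filling itself. By definition, the discs of the filling form an $(n-1)$-parameter family $\{u_b\}_{b\in B}$, with $\dim B=n-1$, whose boundary circles are the leaves of a foliation of $L$. Writing the discs $u_b$ for $b$ near the parameter $b_0$ of $u$ as normal graphs $\xi_b\in W^{k,q}_{V_0}(\D,V)$ over $u$ --- possible once the filling is smooth enough that these discs are $W^{k,q}$-close to $u$ --- one obtains a smooth curve in $\mathcal{D}^{-1}(0)$ through $\xi_{b_0}=0$, so its velocity lies in $\ker\mathcal{D}_0$; this yields a linear map $T_{b_0}B\to\ker\mathcal{D}_0$. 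This map is injective: nearby discs of the filling are not reparametrizations of $u$, since their boundary circles are distinct leaves, so already the boundary restrictions $\xi_b|_{\partial\D}\in\Gamma(V_0)$ depend injectively to first order on $b$. Therefore $\dim_\R\ker\mathcal{D}_0\ge n-1$.

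Combining, $n-1\le\dim_\R\ker\mathcal{D}_0\le n-3$, which is absurd; this proves the proposition. I expect the delicate point to be the lower bound rather than the index estimate: one must verify that the discs of the filling near $u$ genuinely lie in the domain of the nonlinear operator $\mathcal{D}$, i.e.\ that the filling is regular enough for them to be represented as small normal graphs over $u$, and that the assignment of parameters to elements of $\ker\mathcal{D}_0$ is injective and not merely well defined. By contrast, the index computation is a direct adaptation of Proposition~\ref{prop rigidity}; the only new feature is that $V$ has rank $n-1>1$, so that $\ind_\R\mathcal{D}_0$ need not be negative and the regularity hypothesis --- unnecessary when $n=2$ --- is now precisely what makes the argument go through.
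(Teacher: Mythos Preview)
Your proof is correct and follows essentially the same approach as the paper: both compute the Fredholm index of $\mathcal{D}_0$ via Riemann--Roch, bound it above using the monotonicity result from \cite{cielgold}, use regularity to identify $\dim\ker\mathcal{D}_0$ with the index, and contrast this with the lower bound $n-1$ forced by the $(n-1)$-parameter family of discs in the filling. The paper states this last lower bound tersely (``the dimension of an analytic filling is $n+1$ and the dimension of a disc is $2$''), whereas you spell out the injection $T_{b_0}B\hookrightarrow\ker\mathcal{D}_0$ more carefully; otherwise the arguments are the same.
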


	\begin{proof}
		If a disc $u: \D \rightarrow M$ is regular, it generates a smooth moduli space $\mathcal{M}$ of real dimension equal to \[ \dim \ker \mathcal{D}_0 = \dim \ker \mathcal{D}_0- \dim \coker \mathcal{D}_0= \ind \tonde{\mathcal{D}_0}. \]
		
		Thanks to the Riemann-Roch theorem, as stated in \cite{mcduffsalamon}, Theorem C.1.10, the linearized operator $\mathcal{D}_0$ is Fredholm, and
		\[
		\ind \tonde{\mathcal{D}_0}= \rnk \tonde{V} \cdot \chi \tonde{\D}+ \mu \tonde{V, V_0} = (n-1) \cdot \chi \tonde{\D} +\mu \tonde{V, V_0} ,
		\]
		thus we have
		\begin{eqnarray}
		\dim \ker \mathcal{D}_0 & = & (n-1) \cdot \chi \tonde{\D}+ \mu \tonde{V, V_0} = \nonumber   \\ &=& n-1 +\mu \tonde{u^*TM, u^*TL } - \mu \tonde{T \D, T \partial \D} = \nonumber \\ &=& n+ \mu \tonde{u^*TM, u^*TL } -3. \nonumber
		\end{eqnarray}
		Since the dimension of an analytic filling is $n+1$ and the dimension of a disc is $2$, it must be $
		\dim \mathcal{M} \geq n-1 $, so
		\[
		n+ \mu \tonde{u^*TM, u^*TL } -3 \geq n-1  \; \Rightarrow  \mu\tonde{u^*TM, u^*TL } \geq 2, 
		\]
		which contradicts the fact that $L$ is nonpositive monotone, as proved  in \cite{cielgold}.
		
	\end{proof}
	
However, in some cases one can prove that holomorphic discs are certainly not regular, because the index is negative. For example, if $M$ is negative K\"{a}hler-Einstein and
	\begin{itemize}
		\item if $n=3$, then $ n+ \mu\tonde{u^*TM, u^*TL }-3 =\mu \tonde{u^*TM, u^*TL }< 0$, because $L$ is monotone;
		\item if $n=4$, the $n+ \mu \tonde{u^*TM, u^*TL } -3 =\mu \tonde{u^*TM, u^*TL } +1 \leq -1$, because $\mu \tonde{u^*TM, u^*TL } \leq -2$, since it is even and negative.
	\end{itemize}
	Thus in dimension $3$ and $4$ we can not use the theory of $J$-holomorphic curves to prove general results regarding the non existence of holomorphic disc fillings.

	\section{Convexity of the length functional} \label{length}
	In order to generalize our non-filling results to higher dimensions, the above standard ideas are not sufficient. 
	The purpose of this section is to introduce a new technique and to illustrate its use in the simplest situation. Specifically, we prove a convexity result for the Riemannian volume functional in dimension $1$, i.e. the length functional, and use it to give an alternative proof of Theorem \ref{teo: geodesic boundary}. This is a concrete example of the more general results, in higher dimensions, discussed in Sections \ref{j vol} and \ref{disc filling}.

	We will need the following classical \emph{isothermal coordinates} theorem which shows that a Riemannian structure on an oriented surface induces a complex structure, and that the metric is locally conformal to the Euclidean one. See \cite{Jost} for the proof.
	\begin{teo} \label{Jost}
		Let $\tonde{\Sigma, g}$ be an oriented Riemannian surface. Then $\Sigma$ can be made into a Riemann surface, i.e. it admits a complex structure compatible with $g$. Local holomorphic coordinates are given by smoothly invertible solutions of the differential equation 
		\[
		w_{\overline{z}}= \mu w_z,
		\]
		where $\mu$ is a complex valued function which depends on the metric $g$.
		
		In such coordinates the metric has the form $e^{\sigma} d w \otimes d \overline{w}$ and the scalar curvature is $K=-\frac{1}{e^{\sigma}} \, \Delta \sigma$, where $\Delta $ is the Laplacian operator.
	\end{teo}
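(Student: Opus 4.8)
The statement is local, and the passage from local isothermal coordinates to a Riemann surface structure is formal, so the real content is the existence of isothermal coordinates near a point. I would fix such a point, choose smooth real coordinates $(x,y)$ around it, and write $g = E\,dx^2 + 2F\,dx\,dy + G\,dy^2$ with $EG - F^2 > 0$. Let $J$ be the almost complex structure on $\Sigma$ determined by $g$ together with the orientation (rotation by $\pi/2$ in each tangent plane); this is the structure we must produce. Setting $z = x + iy$, a direct computation identifies the $(1,0)$-forms of $J$ as the multiples of $dz + \mu\,d\overline{z}$ with $\mu = \dfrac{E - G + 2iF}{E + G + 2\sqrt{EG - F^2}}$, so that a complex-valued function $w$ is $J$-holomorphic if and only if $dw$ is of type $(1,0)$, if and only if it solves the Beltrami equation $w_{\overline{z}} = \mu\,w_z$; positive-definiteness of $g$ yields $\modulo{\mu} < 1$ pointwise, and $\mu$ is smooth and depends algebraically only on $E, F, G$. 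Thus the theorem reduces to finding, near each point, a solution $w$ of this equation with $dw \neq 0$ there.

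This existence statement is the analytic heart of the proof. When $g$ is real-analytic, so is $\mu$, and the Cauchy--Kovalevskaya theorem produces a local holomorphic solution with any prescribed nonzero value of $w_z$ at the point. For a merely smooth metric I would invoke the classical local solvability of the Beltrami equation, i.e.\ the Korn--Lichtenstein theorem: since $\modulo{\mu} < 1$, on a sufficiently small disc one obtains a $C^{\infty}$ solution $w$ which is a diffeomorphism onto its image. Its proof normalizes $\mu$ to vanish at the center, so that $\norma{\mu}_{\infty}$ is small on a small disc, writes $w(z) = z + (C\phi)(z)$ with $C$ the Cauchy transform (hence $w_{\overline{z}} = \phi$ and $w_z = 1 + S\phi$, with $S$ the Beurling transform), reduces the equation to $(I - \mu S)\phi = \mu$, solves it by a Neumann series in $L^p$ for $p > 2$ close to $2$ because then $\norma{\mu S}_{L^p} < 1$, bootstraps $w$ to $C^{\infty}$ by elliptic regularity, and checks that $w_z$ is close to $1$, so the Jacobian $\modulo{w_z}^2\tonde{1 - \modulo{\mu}^2}$ is positive and $w$ is a local diffeomorphism by the inverse function theorem. (Equivalently, one may solve a divergence-form second-order elliptic equation for the real part of $w$ and recover the imaginary part as a conjugate function.)

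It remains to assemble these charts and to record the shape of the metric. If $w$ and $\widetilde w$ are two such coordinates on overlapping charts, then $g$ is a positive multiple of the Euclidean metric in each, so the transition $\widetilde w \circ w^{-1}$ pulls the Euclidean metric back to a conformal multiple of itself and preserves orientation, hence is holomorphic; therefore the isothermal charts form a holomorphic atlas, and the induced complex structure is $J$, which is $g$-compatible by construction. In such a coordinate $w = u + iv$ one has $g = e^{\sigma}\tonde{du^2 + dv^2} = e^{\sigma}\,dw \otimes d\overline{w}$ for a smooth real $\sigma$, and the standard formula for the Gauss curvature of a conformally flat metric --- namely $e^{2\varphi}\tonde{du^2 + dv^2}$ has Gaussian curvature $-e^{-2\varphi}\,\Delta\varphi$ --- gives, with $2\varphi = \sigma$, Gaussian curvature $-\tfrac12 e^{-\sigma}\,\Delta\sigma$ and hence scalar curvature $K = -e^{-\sigma}\,\Delta\sigma$, with $\Delta = \partial_u^2 + \partial_v^2$. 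The main obstacle is the middle step: producing a smoothly invertible local solution of the Beltrami equation for a merely smooth metric genuinely requires elliptic PDE or singular-integral theory, the real-analytic case being an easy special case; deriving the Beltrami equation, gluing the isothermal charts, and computing the curvature are all routine.
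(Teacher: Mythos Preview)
The paper does not prove this theorem at all; it simply writes ``See \cite{Jost} for the proof'' and moves on, treating the result as classical background. There is therefore no proof in the paper to compare against. Your sketch is a correct and complete outline of the standard argument: you correctly reduce the existence of isothermal coordinates to the local solvability of the Beltrami equation $w_{\overline{z}} = \mu w_z$ with the explicit coefficient $\mu = (E - G + 2iF)/(E + G + 2\sqrt{EG - F^2})$, invoke Korn--Lichtenstein (via the Cauchy and Beurling transforms and a Neumann series, exactly as in Ahlfors--Bers) for the smooth case, observe that the transition maps are conformal and orientation-preserving hence holomorphic, and derive the curvature formula from the conformally flat expression. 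This is essentially the proof one finds in Jost or in any standard reference, so your proposal is both correct and aligned with what the paper is implicitly citing.
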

	Thanks to the complex structure $J$ compatible with the metric $g$, it is possible to define the symplectic form
	\[
	\omega \tonde{\cdot, \cdot}= g \tonde{J \cdot, \cdot},
	\]
	and the hermitian metric
	\[
	h:= g - i \omega.
	\]
	From now on, $J$ will denote this complex structure on $\tonde{\Sigma, g}$.

	\begin{teo} \label{length functional}
		Let $\tonde{\Sigma, g}$ be a Riemannian surface with nonpositive curvature and let $u =u(z)=u(r, \theta): \overline{\D} \rightarrow \tonde{\Sigma, J}$ be a smooth map, holomorphic on $\D$. Then the length functional 
		\[
		L: (0,1] \longrightarrow \R, \qquad r \mapsto \int_0^{2\pi}   \norma{d u |_{(r,\cdot)}\tonde{\partial \theta}}_g d \theta
		\]
		is nondecreasing and convex with respect to the variable $\log r$.
	\end{teo}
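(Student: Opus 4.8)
The plan is to pass to logarithmic coordinates on the disc, where the statement turns into a convexity statement for the circular averages of a subharmonic function, and then to recognise the relevant length density as the exponential of a subharmonic function, using the isothermal description of $g$ from Theorem \ref{Jost} together with the sign of the curvature. First I would set $\zeta = t + i\theta := \log z$, so that $\overline{\D}\setminus\{0\}$ is parametrised by the half-cylinder $\{t\le 0\}\times\tonde{\R/2\pi\Z}$, and put $U(\zeta):=u\tonde{e^\zeta}$, which is again holomorphic. Since $\partial_\theta$ is unchanged under this reparametrisation, $L(r)=\int_0^{2\pi}\norma{\partial_\theta U(t+i\theta)}_g\,d\theta$ with $t=\log r$; writing $\ell(t)$ for this quantity, it suffices to prove that $\ell$ is nondecreasing and convex, i.e. that the circular averages of $F(\zeta):=\norma{\partial_\theta U(\zeta)}_g$ are nondecreasing and convex in $t$.

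The key step is to show that $F$ is subharmonic on the cylinder. This is a local matter, so near a point $\zeta_0$ I would choose isothermal coordinates $w$ around $U(\zeta_0)$, in which $g=e^\sigma\,dw\otimes d\overline{w}$, and write $W$ for the holomorphic function representing $U$ in these coordinates. Conformality of $g$ and holomorphicity of $U$ give $\norma{\partial_\theta U}_g=e^{\sigma(W)/2}\modulo{W_\zeta}$, hence $\log F=\frac12\,\sigma\circ W+\log\modulo{W_\zeta}$. Now $\log\modulo{W_\zeta}$ is subharmonic because $W_\zeta$ is holomorphic, and $\sigma\circ W$ is subharmonic because $\sigma$ is subharmonic --- by Theorem \ref{Jost}, $\Delta\sigma=-Ke^\sigma\ge 0$ as $K\le 0$ --- and the precomposition of a subharmonic function with a holomorphic map is subharmonic. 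Thus $\log F$, and hence $F=e^{\log F}$, is subharmonic, the exponential being convex and increasing. (If $u$ is constant the theorem is trivial; otherwise $u_z$, hence $W_\zeta$, has only isolated zeros, which do not affect subharmonicity.)

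With $F$ subharmonic, the convexity of $\ell(t)=\int_0^{2\pi}F(t+i\theta)\,d\theta$ in $t$ is the standard fact for circular averages on a cylinder: when $F$ is smooth it is the one-line computation $\ell''(t)=\int_0^{2\pi}\partial_t^2 F\,d\theta=\int_0^{2\pi}\tonde{\Delta F-\partial_\theta^2 F}\,d\theta=\int_0^{2\pi}\Delta F\,d\theta\ge 0$, the $\partial_\theta^2$--term vanishing by periodicity, and the general case follows by mollification. For the monotonicity, since $u$ is smooth up to $\partial\D$ the estimate $\norma{\partial_\theta U}_g=r\,e^{\sigma(W)/2}\modulo{u_z}\le C r$ shows $\ell(t)\to 0$ as $t\to -\infty$; a nonnegative convex function on $(-\infty,0]$ tending to $0$ at $-\infty$ is automatically nondecreasing, because a negative slope at any point would force it to blow up at $-\infty$.

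I expect the only real subtlety to be bookkeeping: the conformal factor $\sigma$ depends on the chart, but $F$ itself does not and subharmonicity is checked locally, so this causes no trouble; similarly one needs the usual potential-theoretic care at the zeros of $u_z$ and at the origin. The substantive point is just that the logarithm of the length density of a holomorphic curve splits as a curvature term plus $\log\modulo{(\text{holomorphic derivative})}$, both subharmonic precisely when $K\le 0$.
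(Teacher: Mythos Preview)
Your proposal is correct and follows essentially the same approach as the paper: both show that the length density $\norma{\partial_\theta u}_g$ is subharmonic via the isothermal form of $g$ and the nonpositivity of the curvature, and then extract the monotonicity and $\log r$--convexity of its circular averages. The only cosmetic differences are that the paper stays on $\D$ and uses harmonic majorants on discs and annuli instead of your direct $\ell''\ge 0$ computation on the half-cylinder, and proves monotonicity via the mean-value property rather than deducing it from convexity together with $\ell(t)\to 0$.
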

	\begin{proof}
		Denote with $\alpha_r$ the circle $\graffe{r e^{i \theta} } \subseteq \overline{\D}$ and consider the function
		\begin{eqnarray}
		f(z)= f(r e^{i \theta}) & := &  \norma{d u |_{(r,\cdot)}\tonde{\partial \theta}}_g   = \norma{du \tonde{\partial z} }_h   \cdot \left| \frac{\partial z}{\partial \theta}\right|= \nonumber \\ &= & \norma{du \tonde{\partial z} }_h  \cdot r =  \norma{du \tonde{\partial z} \cdot z}_h, \nonumber 
		\end{eqnarray}
		where $h$ is the hermitian metric induced by $g$.
		
		We want to show that $f: \overline{\D} \longrightarrow \R$ is subharmonic.	Thanks to Theorem \ref{Jost}, for each point of $\Imm \tonde{f}$ we can consider a holomorphic chart $\varphi$ defined on a neighborhood of the point.  
		Thus we have that 
		\[	
		\norma{du \tonde{\partial z} \cdot z}_h= e^{\sigma \circ \varphi \circ u (z)} \cdot  \left| \frac{\partial \tonde{\varphi \circ u}}{\partial z} \cdot z \right|_{std} .
		\]

		Since $K \leq 0$, we have that $ \Delta \sigma \geq 0$.
		
		The function $\sigma $ is subharmonic; since $\varphi $ and $f$ are holomorphic it follows that $\sigma \circ \varphi \circ u$ is subharmonic too. 
		Furthermore $\frac{\partial \tonde{\varphi \circ u}}{\partial z}$ is holomorphic, so
		\[
		\tonde{\sigma \circ \varphi \circ u}(z)+ \log \left| \frac{\partial \tonde{\varphi \circ u}}{\partial z} \cdot z \right| 
		\]
		is locally subharmonic, so 
		\[
		f(z)= e^{\sigma \circ \varphi \circ u(z)} \cdot  \left| \frac{\partial \tonde{\varphi \circ u}}{\partial z} \cdot z \right|
		 \] is locally subharmonic too. Covering $\Imm(f)$ with local holomorphic charts, we have that $f(z)$ is subharmonic.
		\\A standard argument shows that $L$ is nondecreasing: choose $0 < r_1 <r_2 <1$ and let $k(z)$ be the function harmonic in $|z|< r_2$, continuous in $|z| \leq r_2$ and equal to $u(z)$ on $\alpha_{r_2}$. Then $u(z) \leq k(z)$ in $|z| \leq r_2$, so
		\begin{eqnarray} 
		L(r_1) & \leq & \int_0^{2\pi}  k \tonde{r_1 e^{i \theta}} d \theta = 2 \pi  k(0) =  \nonumber \\ \nonumber 
		& = & \int_0^{2\pi}  k \tonde{r_2 e^{i \theta}} d \theta =\int_0^{2\pi}  u\tonde{r_2 e^{i \theta}} d \theta = L(r_2).
		\end{eqnarray}
		Furthermore, the functional $L$ is convex with respect to the variable $\log r$: let $k(z)$ be the harmonic function in the annulus $A= \graffe{r_1< |z|<r_2}$, continuous in $r_1 \leq |z| \leq r_2$ and equal to $f(z)$ on the two boundary components of $A$. Then $f(z) \leq k(z)$ for $r_1 \leq |z| \leq r_2$ and
		\begin{eqnarray}
		\frac{d }{d r} \int_0^{2 \pi} \ k\tonde{r e^{i \theta}} d \theta = \int_0^{2 \pi} \frac{d }{d r} k \tonde{r e^{i \theta}} d \theta = \frac{1}{r}\int_0^{2\pi}  \frac{\partial k}{\partial \mathbf{n}} d \sigma, \nonumber 
		\end{eqnarray}
		where $d \sigma= r d \theta$ is the element of arc length and $\frac{\partial k }{\partial \mathbf{n}}$ denotes the normal derivative, \emph{i.e.} $<\grad k, \mathbf{n} >$, where $\mathbf{n}$ is the normal vector.
		
		As $k$ is a harmonic map and thanks to the Divergence Theorem we have that:
		\begin{eqnarray}
		0 & = & \int_A  \Delta k= \int_{\partial A} <\grad k, \mathbf{n}_{ext} > d \sigma= \nonumber \\
		& = & \int_{B_{r_2}} <\grad k, \mathbf{n} > d \sigma - \int_{B_{r_1}} <\grad k, \mathbf{n} > d \sigma, \nonumber
		\end{eqnarray}
		so $\int_0^{2\pi}  \frac{\partial k}{\partial \mathbf{n}} d \sigma$ does not depend on $r$, and	we can conclude that, for $r \in A$,
		\[
		L(r) \leq \int_0^{2 \pi} \ k \tonde{r e^{i \theta}} d \theta= a\log r +C.
		\]
		
		Furthermore, $f(z)$ and $k(z)$ coincide on $\partial A$, so the functional $L$ is convex with respect to the variable $\log r$.
		
	\end{proof}

This result gives an alternative proof of the non existence of holomorphic discs with geodesic boundary: 

	\begin{proof}[Second proof of Theorem \ref{teo: geodesic boundary}]
		If a smooth map
		\[
		u=u(z):\overline{\D} \longrightarrow \Sigma,
		\]
		holomorphic on $\D$	existed, $r=1$ would be a critical point for the length functional $L: (0,1] \rightarrow \R$ defined above, since $u|_{\alpha_1}\equiv \gamma$ is a geodesic. Thanks to Theorem \ref{length functional} the functional 
		\[
		L \circ \exp: (- \infty, 0] \longrightarrow  \R
		\] 
		has a critical point in $t=0$, it is convex and tends to $0$ when $t$ tends to $- \infty $: this  gives a contradiction.
		
	\end{proof}

	\section{The $J$-Volume functional} \label{j vol}
	In this section we introduce an alternative volume functional defined in \cite{borrelli} and we state its properties, proved by J. Lotay and T. Pacini in \cite{LP} and \cite{LP1}, which generalize the convexity of the length functional proved in Section \ref{length}.
	
	\begin{defin}
		Let $\tonde{M,  J, h}$ be an almost Hermitian manifold. Denoting with $g$ the real part of the hermitian metric of $M$, and with $TR^+$ the Grassmannian of oriented totally real $n$-planes, we define the function
		\[
		\rho_J: TR^+ \rightarrow \R, \; \;\;\; \pi \longmapsto \sqrt{\vol_g\tonde{e_1, \ldots, e_n, J e_1, \ldots, Je_n}},
		\]
		where $e_1, \ldots, e_n $ is a positive orthonormal basis for the totally real $n$-plane $\pi$.
		
		Fix a totally real submanifold $\iota: L \rightarrow  M$. We can define the $J$-volume form as
		\[
		\vol_J = \rho_J \vol_g,
		\] 
		where $\vol_g$ is the standard Riemannian volume on $L$.
	\end{defin}
	
	Observe that $\rho_J \tonde{\pi} \leq 1$ and that the equality holds if and only if $\pi$ is Lagrangian. 
	
	We can extend the map $\rho_J$ to the Grassmannian of $n$-planes, setting $\rho_J (\pi)=0$ and $\sigma \quadre{\pi}=0  $, when $\pi$ contains a complex line. 

	We will say that a totally real submanifold is \emph{$J$-minimal } if it is a critical point for the $J$-volume functional. See \cite{borrelli}, \cite{LP2} for examples of $J$-minimal submanifolds. The following results hold (cf. \cite{LP}, \cite{LP1}):
	\begin{lem} \label{minimi}
		For any compact oriented $n$-dimensional submanifold $L$ in an almost Hermitian manifold $\tonde{M, J, h}$, we have $\Vol_J (L) \leq \Vol_g (L) $ and the equality holds if and only if $L$ is Lagrangian. In particular, the values of $\Vol_J$ and $\Vol_g$ and their first derivatives coincide on Lagrangian submanifolds.
	\end{lem}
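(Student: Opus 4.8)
The plan is to derive the inequality $\Vol_J(L)\le\Vol_g(L)$ directly from the pointwise observation, already noted in the text, that $\rho_J(\pi)\le 1$ for every oriented totally real $n$-plane $\pi$, with equality precisely when $\pi$ is Lagrangian. First I would recall that $\vol_J=\rho_J\,\vol_g$ by definition, so integrating over the compact submanifold $L$ gives $\Vol_J(L)=\int_L\rho_J\,\vol_g\le\int_L\vol_g=\Vol_g(L)$; since $\rho_J$ is continuous and $\le 1$, equality in the integral forces $\rho_J\equiv 1$ along $L$, i.e.\ every tangent plane $T_pL$ is Lagrangian, which is exactly the statement that $L$ is a Lagrangian submanifold. (If $L$ is not everywhere totally real, $\rho_J$ is extended by $0$ where a tangent plane contains a complex line, and the inequality is only reinforced.) The pointwise fact $\rho_J(\pi)\le 1$ itself is a linear-algebra computation: choosing a positive orthonormal basis $e_1,\dots,e_n$ of $\pi$ and writing $Je_i$ in terms of the $e_j$ and a $g$-orthonormal complement, one sees that $\vol_g(e_1,\dots,e_n,Je_1,\dots,Je_n)$ is the determinant of a block matrix whose top-left block is the identity, and a Hadamard-type estimate on the remaining columns (using $\|Je_i\|_g=1$) bounds this determinant by $1$, with equality iff the $Je_i$ are $g$-orthogonal to $\pi$, i.e.\ iff $\pi$ is Lagrangian.

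For the final sentence — that the values and first derivatives of $\Vol_J$ and $\Vol_g$ agree on Lagrangian submanifolds — I would argue as follows. Equality of values on Lagrangians is immediate from the first part. For the first derivatives, consider any smooth variation $\iota_t:L\to M$ with $\iota_0$ Lagrangian, and set $\phi(t)=\Vol_g(\iota_t)$, $\psi(t)=\Vol_J(\iota_t)$. By the first part $\psi(t)\le\phi(t)$ for all $t$, and $\psi(0)=\phi(0)$; hence the smooth function $\phi-\psi\ge 0$ attains a minimum at $t=0$, so $(\phi-\psi)'(0)=0$, i.e.\ $\tfrac{d}{dt}\big|_{0}\Vol_J(\iota_t)=\tfrac{d}{dt}\big|_{0}\Vol_g(\iota_t)$. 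Since the variation was arbitrary, the first derivatives coincide on Lagrangians.

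I expect the only genuinely technical point to be the pointwise inequality $\rho_J(\pi)\le 1$ with the sharp equality characterization; everything after that is either a one-line integration or the elementary ``nonnegative smooth function with an interior zero has vanishing derivative there'' argument. Alternatively, since this lemma is quoted from \cite{borrelli}, \cite{LP}, \cite{LP1}, one may simply cite those references for the pointwise bound and keep only the integration and the minimum-principle argument for the derivative statement; I would present the short self-contained version above, as it makes the paper's later use of $\vol_J$ as a ``perturbed volume'' transparent.
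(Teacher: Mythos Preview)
Your proposal is correct and follows essentially the same approach as the paper: the pointwise bound $\rho_J\le 1$ (with equality exactly on Lagrangian planes) is integrated to get the volume inequality, and the derivative statement is obtained from the elementary observation that the nonnegative function $\Vol_g(\iota_t)-\Vol_J(\iota_t)$ attains a minimum at $t=0$. The only difference is that you include a linear-algebra justification of $\rho_J\le 1$, whereas the paper simply cites the observation made just before the lemma.
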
  
	\begin{proof}
		The first part follows from the fact that $\rho_J \leq 1$ with equality if and only if $L$ is Lagrangian. 
		
		To prove the second statement, let $\graffe{L_t}$ be a one-parameter family of totally real submanifolds such that $L_0$ is Lagrangian. Consider the real functions $f(t):= \Vol_g \tonde{L_t}$ and $g(t):= \Vol_J \tonde{L_t}$; we have that $g-f \geq 0$. Since it is null for $t=0$, this is minimum, hence it is a critical point, so $f^{\prime}(0)= g^{\prime}(0)=0$.

	\end{proof}

	\begin{teo} \label{j minimal}
		Let M be a K\"{a}hler–Einstein manifold with $\Ric \neq 0$. Then
		the set of $J$-minimal submanifolds coincides with the set of minimal Lagrangians.
	\end{teo}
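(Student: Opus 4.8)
The plan is to prove the two inclusions separately, using Lemma \ref{minimi} as the bridge between the two functionals. The easy inclusion is: every minimal Lagrangian is $J$-minimal. Indeed, if $L$ is Lagrangian then $\rho_J \equiv 1$ along $L$, so $\vol_J = \vol_g$ on $L$; more importantly, by Lemma \ref{minimi} the first derivatives of $\Vol_J$ and $\Vol_g$ agree at any Lagrangian, so a Lagrangian $L$ is a critical point of $\Vol_g$ if and only if it is a critical point of $\Vol_J$. Since minimal means critical for $\Vol_g$, a minimal Lagrangian is automatically $J$-minimal. (Note this direction does not even need $\Ric\neq 0$.)

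The substantive inclusion is the converse: a $J$-minimal submanifold in a Kähler–Einstein manifold with $\Ric\neq 0$ must be Lagrangian (and then, being critical for $\Vol_J$, it is critical for $\Vol_g$ by the same Lemma, hence minimal). The idea is to compute the first variation of $\Vol_J$ and extract from it that the Lagrangian angle, or equivalently the defect $1-\rho_J$, is forced to vanish. Concretely, I would write the Euler–Lagrange equation for $\vol_J = \rho_J\,\vol_g$ at a totally real critical point $L$: deforming $L$ by a normal vector field $X$, one gets a term from varying $\vol_g$ (the mean curvature $H$) and a term from varying $\rho_J$. For a general totally real submanifold $\rho_J = \cos\theta$ in a suitable sense, and the variation of $\rho_J$ couples to the ambient Ricci form — this is precisely where the Kähler–Einstein condition enters, making the "$\de\theta$" contribution proportional, via $\Ric = \lambda\, \omega$ with $\lambda\neq 0$, to the pullback of $\omega$, i.e. to the quantity measuring failure of the Lagrangian condition. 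I expect the variational formula (already available in \cite{borrelli}, \cite{LP}, \cite{LP1}) to give: $L$ is $J$-minimal $\iff$ $H_J = 0$, where $H_J$ is the $J$-mean curvature, and that on a Kähler–Einstein background $H_J = 0$ splits into the Lagrangian condition $\iota^*\omega \equiv 0$ plus the minimality of the resulting Lagrangian. The role of $\lambda\neq 0$ is to ensure the coefficient in front of the "$\iota^*\omega$" piece is non-degenerate, so that it cannot be cancelled against the mean-curvature piece; when $\lambda = 0$ (Calabi–Yau) the two decouple trivially and $J$-minimal no longer implies Lagrangian, which is why the hypothesis is needed.

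So the key steps, in order: (1) invoke Lemma \ref{minimi} to get that $\Vol_J$ and $\Vol_g$ have the same critical points among Lagrangians, giving the inclusion $\{$minimal Lagrangians$\} \subseteq \{J\text{-minimal}\}$; (2) recall the first-variation formula for $\Vol_J$ from \cite{LP}/\cite{LP1}, identifying the $J$-mean curvature $H_J$; (3) on a Kähler–Einstein manifold with $\Ric = \lambda\omega$, $\lambda\neq 0$, analyze the equation $H_J = 0$ and show the component transverse to the "minimal Lagrangian" locus is $\lambda$ times (a nonzero multiple of) the obstruction $\iota^*\omega$, forcing $L$ Lagrangian; (4) conclude that a $J$-minimal $L$ is Lagrangian, then apply step (1) in reverse to get that it is minimal. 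The main obstacle is step (3): carefully tracking how the Ricci form appears in $\delta\rho_J$ and verifying that the resulting first-order system genuinely forces $\iota^*\omega \equiv 0$ rather than merely constraining it — this is the point where one must use the Kähler identities and the $\Ric\neq 0$ hypothesis in an essential way, and where a sign or normalization error would be easy to make.
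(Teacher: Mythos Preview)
The paper does not actually prove this theorem: it is stated immediately after ``The following results hold (cf.\ \cite{LP}, \cite{LP1})'' and is quoted from those references without argument. So there is no in-paper proof to compare against; what you have written is an outline of how the cited proof goes.

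Your outline is sound in structure and matches the strategy in \cite{LP}, \cite{LP1}. The easy inclusion via Lemma \ref{minimi} is exactly right. For the converse, the relevant first-variation computation in those references shows that the $J$-mean curvature vector satisfies $\omega(H_J,\cdot)=\iota^*\sigma$ for a certain canonical $1$-form $\sigma$ on $M$ (the ``Maslov'' $1$-form associated to the canonical bundle), and on a K\"ahler--Einstein manifold $d\sigma=\lambda\,\omega$. Thus $H_J=0$ forces $\iota^*\omega=0$ when $\lambda\neq 0$, i.e.\ $L$ is Lagrangian; then Lemma \ref{minimi} gives minimality. Your step (3) captures this mechanism correctly, though your phrasing ``$\rho_J=\cos\theta$'' and the talk of a single Lagrangian angle is only heuristic in general dimension --- the actual argument works with the form $\sigma$ (or equivalently with $d\log\rho_J$ along $J$-directions) rather than a scalar angle, and the decisive identity is $d(\iota^*\sigma)=\lambda\,\iota^*\omega$. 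That is the precise place where $\lambda\neq 0$ is used, exactly as you anticipated.
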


	Interesting properties hold for the $J$-volume functional adding assumptions on the ambient manifold. 
	
	\begin{defin} \label{special curve} Let $\mathcal{T}$ be the space of totally real immersions of $L$ into $M$ which are homotopic, through totally real immersions, to the given $\iota$, up to orientation preserving diffeomorphisms of $L$.
		
		A one parameter family  $\graffe{\iota_t: L \rightarrow M}_{t \in \R}$ in $\mathcal{T}$ is a \emph{geodesic in $\mathcal{T}$} if and only if there exists a fixed vector field $X \in \Lambda^0 \tonde{TL} $ such that
		\begin{equation} \label{geodetiche}
		\frac{\partial \iota_t}{\partial t}= J \tonde{d \iota_t \tonde{X}}.
		\end{equation}

		We say that a functional $f: \mathcal{T} \longrightarrow \R$ is convex (respectively, strictly convex) if and only if it restricts to a convex function (respectively, strictly convex function) in one variable along any geodesic in $ \mathcal{T}$.
	\end{defin}
	
	Such a family is called geodesic in $\mathcal{T}$ because there exists a connection on $\mathcal{T}$ with respect to which it is a geodesic; cf \cite{LP}, Section 2.
	
	In \cite{LP} the following result about the convexity of the $J$-volume functional is proved.
	\begin{teo} \label{convexity}
		Let $M$ be a K\"{a}hler manifold with $\Ric \leq 0$ (respectively, $\Ric < 0$). Then the $J $-volume functional is convex (respectively, strictly convex). 
	\end{teo}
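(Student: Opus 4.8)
The plan is to compute the second derivative of $t \mapsto \Vol_J(\iota_t)$ along a geodesic $\graffe{\iota_t}$ in $\mathcal{T}$, i.e. along a family satisfying $\partial_t \iota_t = J(d\iota_t(X))$ for a fixed $X \in \Lambda^0(TL)$, and to show it is $\geq 0$ when $\Ric \leq 0$ (and $>0$ when $\Ric < 0$). First I would recall the variational formulas for the $J$-volume from \cite{borrelli},\cite{LP1}: there is a canonical $1$-form $\theta_H$ (the generalized mean-curvature form, dual to the $J$-mean curvature vector) such that the first variation of $\Vol_J$ along a deformation with normal part $J(d\iota(Y))$ is, up to sign, $\int_L \theta_H(Y)\,\vol_J$. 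Differentiating once more along the geodesic (where the deformation vector field is the \emph{fixed} $J(d\iota(X))$, which is precisely what makes the second-order computation tractable — the acceleration term drops because $X$ is $t$-independent in the sense of equation \eqref{geodetiche}) produces an expression of the schematic form
\[
\frac{d^2}{dt^2}\Vol_J(\iota_t) = \int_L \Big( \modulo{\text{first-order terms in } \nabla X}^2 + \langle \text{curvature operator}\,(X), X\rangle \Big)\,\vol_J ,
\]
where the curvature operator is built from the ambient Riemannian curvature and, after using the Kähler and Einstein conditions, reduces to a multiple of $\Ric$ evaluated on the relevant planes.

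The key steps, in order, are: (1) set up the geodesic family and identify the deformation field $Z_t = J(d\iota_t(X))$, noting $\nabla_t Z_t$ simplifies because $X$ is fixed; (2) write down the first variation of $\vol_J = \rho_J\,\vol_g$, splitting into the variation of $\rho_J$ (a linear-algebraic term controlled by how the tangent plane tilts relative to $J$) and the variation of $\vol_g$ (the usual first-variation-of-area term with mean curvature); (3) differentiate again, carefully collecting the terms — here one uses that along the geodesic direction the ``bad'' cross terms organize into a perfect square plus a curvature term, which is the mechanism behind the analogous classical second-variation-of-energy/length convexity (and matches what Section \ref{length} did in dimension $1$, where $\rho_J \equiv 1$ on the circle and the curvature term is exactly $\int K$); (4) invoke the Kähler identities to rewrite the full ambient curvature contribution in terms of the Ricci curvature on totally real planes — this is where $J$-parallelism is essential — obtaining a term proportional to $-\Ric(JX, JX)$ or similar; (5) conclude: under $\Ric \leq 0$ this term is $\geq 0$, and together with the manifest nonnegativity of the square term gives convexity; under $\Ric < 0$ the curvature term is strictly positive unless $X \equiv 0$ (the trivial geodesic), giving strict convexity.

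The main obstacle I expect is step (3)–(4): carrying out the second variation of $\rho_J$ and showing that all the terms that are not manifestly signed — in particular the ones mixing $\nabla X$ with the second fundamental form and with the tilting of the plane under $J$ — recombine, after integration by parts on $L$ and use of the first-order (criticality-independent) Kähler structure equations, into a nonnegative quadratic form plus the Ricci term. This is a genuinely delicate computation because $\rho_J$ is a nonlinear function on the totally real Grassmannian, so its Hessian contributes, and one must be careful that the deformation stays totally real (which is why the statement is about the \emph{open} locus $\mathcal{T}$ and why $\rho_J$ was extended by $0$ on planes containing a complex line — the geodesic could a priori exit $\mathcal{T}$, but convexity of the functional on the region where it is smooth and its blow-down behavior at the boundary together suffice). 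Since this computation is exactly the content of \cite{LP}, for the purposes of this paper I would present the setup (steps 1–2) and the final signed formula, cite \cite{LP} for the detailed identities, and then spell out step (5) carefully.
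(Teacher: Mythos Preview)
The paper does not prove Theorem \ref{convexity} at all: it is quoted as a result of Lotay--Pacini \cite{LP} and stated without argument. Your proposal is therefore not comparable to a ``paper's own proof''; rather, you have written a reasonable outline of the second-variation computation carried out in \cite{LP}, and you correctly identify that the delicate part is the reorganisation of the Hessian of $\rho_J$ and the curvature terms into a nonnegative square plus a Ricci contribution. For the purposes of this paper, even the partial sketch you suggest (steps (1)--(2) and (5), citing \cite{LP} for the core identity) already exceeds what the paper provides, so your plan is more than adequate---but be aware that the paper's intent is simply to invoke the result, not to reproduce or sketch its proof.
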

Observe that in dimension $1$ any totally real curve is Lagrangian, so the $J $-volume functional, the Riemannian volume functional and the length functional coincide. In this case the statement above generalizes Theorem \ref{length functional}.

	\section{Non existence of a holomorphic discs filling} \label{disc filling}
	In Section \ref{Rigidity} we proved the non existence of a filling by holomorphic discs for minimal Lagrangian tori, as stated in Theorem \ref{solid torus}. 
	
	In this section we will generalize that result to higher dimensions.

	Let $L$ a compact, oriented $n$-dimensional manifold such that $L$ admits a locally trivial $S^1$-fibre bundle structure, \emph{i.e.} there exists a $S^1$-fibre bundle $\pi: L \rightarrow B$. Observe that $\pi: L \rightarrow B$ is a restriction of a complex fiber line bundle $\tonde{N, h} \rightarrow B$, the restrictions to the circles of radius $r$ on each fiber are circle bundles themselves and that the restriction $N \rightarrow B$ to the unit discs is a disc bundle. 
	
	For example, the $n-$dimensional torus $\tonde{S^1}^n$, seen as the trivial $S^1$-bundle over the $(n-1)$-torus, is the restriction of the complex trivial line bundle $\C \times \tonde{S^1}^{n-1} \rightarrow \tonde{S^1}^{n-1}$. 
	Also the Hopf fibration is the restriction to the sphere $S^{2n+1}$ of the tautological line bundle over $\mathbb{P}^n \tonde{\C}$.
	
	In this setting, we want to generalize the concept of \emph{filling by holomorphic discs}, used in Theorem \ref{solid torus}.
\begin{defin} \label{def: filling }
Using the notation above, let $L \hookrightarrow M $ be an immersion which extends to an immersion $\iota^{\prime}: N \rightarrow M$. If its restriction to each fiber $\D_p= \pi^{-1}(p)$ is holomorphic, we will say that it is a \emph{filling by holomorphic discs}.  
\end{defin}

	\begin{defin} \label{def: rank1}
		Let $M$ be a $2n -$dimensional K\"{a}hler manifold; an immersed submanifold $S$ has \emph{complex rank 1} if, for each $p \in S$, the tangent space $T_pS$ contains exactly one complex line. 
	\end{defin}

We can now prove our main result, which generalizes Theorem \ref{solid torus} to higher dimensions, since a solid torus in a $4$-dimensional K\"{a}hler-Einstein manifold has complex rank $1$. 

	\begin{teo} \label{main thm}
		Let $M$ be a $2n$-dimensional K\"{a}hler manifold with $\Ric \leq 0$ and let $L$ be an oriented compact $n$-manifold which admits a locally trivial $S^1$-fibre bundle structure. If $\iota: L \hookrightarrow M$ is a $J$-minimal immersion, it does not admit any complex rank 1 filling by holomorphic discs.
	\end{teo}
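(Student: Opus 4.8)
The plan is to combine the convexity of the $J$-volume functional (Theorem \ref{convexity}) with the existence of the $S^1$-action, in the same spirit as the second proof of Theorem \ref{teo: geodesic boundary}, but now with the length functional replaced by the $J$-volume and the single disc replaced by the disc bundle $N \to B$. Suppose, for contradiction, that the $J$-minimal immersion $\iota: L \hookrightarrow M$ extends to a complex rank $1$ filling by holomorphic discs $\iota': N \to M$. For $r \in (0,1]$ let $L_r \subset N$ denote the subbundle of circles of radius $r$ in each fibre; this is again a locally trivial $S^1$-bundle over $B$, diffeomorphic to $L$, and $L_1 = L$. I would first show that the family $\{\iota'|_{L_r}\}$, suitably reparametrized by $t = \log r$, is (up to the ambient diffeomorphism/reparametrization subtleties of $\mathcal{T}$) a geodesic in the sense of Definition \ref{special curve}: the vector field $X \in \Lambda^0(TL)$ should be the generator of the $S^1$-action scaled appropriately (the ``$r\partial_r$'' direction on the disc bundle), and the holomorphicity of $\iota'$ on each fibre is exactly what turns $\partial_r \iota'$ into $J(d\iota'(X))$, since on a holomorphic disc the radial and angular derivatives are related by $J$. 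This is where the complex rank $1$ hypothesis enters: it guarantees that $T_pL_r$ is totally real for $r < 1$ close to $1$ (the unique complex line in the tangent space to the filling is the fibre tangent line, which is transverse to $L_r$), so that $\{L_r\}$ is indeed a path of totally real immersions and $\mathrm{Vol}_J$ is defined along it.

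Next I would exploit convexity. By Theorem \ref{convexity}, $\mathrm{Vol}_J$ restricted to this geodesic is a convex function of $t = \log r$ on $(-\infty, 0]$. As $r \to 0$ the circles $L_r$ degenerate onto the zero section $B$ (a set of dimension $n-1$), so $\mathrm{Vol}_g(L_r) \to 0$ and hence $\mathrm{Vol}_J(L_r) = \int_{L_r} \rho_J \,\mathrm{vol}_g \to 0$ since $\rho_J \le 1$. On the other hand, $\iota$ is $J$-minimal, so $t = 0$ is a critical point of $t \mapsto \mathrm{Vol}_J(L_{e^t})$. A convex function on $(-\infty, 0]$ with a critical point at the right endpoint $t=0$ and limit $0$ at $-\infty$ must be identically zero (a critical point of a convex function is a global minimum, so $\mathrm{Vol}_J(L_r) \le \mathrm{Vol}_J(L_1) = \mathrm{Vol}_J(L)$ for all $r$, yet also $\mathrm{Vol}_J(L) \le \lim_{r\to 0}\mathrm{Vol}_J(L_r) = 0$). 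Hence $\mathrm{Vol}_J(L) = 0$, which is absurd since $L$ is an $n$-dimensional immersed submanifold and $\rho_J > 0$ on totally real planes, so $\mathrm{Vol}_J(L) > 0$. This contradiction completes the proof.

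The main obstacle I anticipate is the first step: rigorously checking that the radial family $\{L_r\}$ really is a geodesic in $\mathcal{T}$ in the precise sense of equation \eqref{geodetiche}, namely that $\partial_t \iota_t = J(d\iota_t(X))$ for a \emph{fixed} vector field $X$ on the abstract manifold $L$ independent of $t$. One has to trivialize $N$ near the unit circle bundle, write the holomorphic fibre coordinate as $z = re^{i\theta}$, verify that $\partial_r \iota' = r^{-1} J(d\iota'(\partial_\theta))$ by the holomorphicity of $\iota'$ on fibres, and then reparametrize $r = e^t$ so that $\partial_t \iota' = J(d\iota'(\partial_\theta))$ with $X = \partial_\theta$ genuinely fixed; some care is needed because $\iota'$ need not be an embedding, the bundle need not be trivial, and Definition \ref{special curve} works modulo orientation-preserving diffeomorphisms of $L$, so one must ensure the path stays in the prescribed homotopy class $\mathcal{T}$ of totally real immersions. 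A secondary technical point is the behaviour as $r \to 0$: one should confirm that $\mathrm{Vol}_J(L_r) \to 0$ holds even though the immersion $\iota'$ may fail to be an immersion exactly on the zero section — but since $\iota'$ is smooth on all of $N$, the pulled-back volume form extends smoothly and its integral over $L_r$ tends to the integral over the $(n-1)$-dimensional zero section, which vanishes. Once the geodesic identification is secure, the rest is a direct application of the quoted convexity and minimality results.
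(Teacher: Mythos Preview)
Your proposal is correct and follows essentially the same route as the paper: reparametrize the radial family $L_r$ by $t=\log r$ (the paper uses $r=e^{-t}$, but this is immaterial), identify $X=\partial_\theta$ so that the fibrewise Cauchy--Riemann equation yields exactly the geodesic condition $\partial_t\iota_t=J(d\iota_t(\partial_\theta))$, use the complex rank $1$ hypothesis to ensure each $L_r$ is totally real, and then derive a contradiction from convexity of $\Vol_J$ together with $J$-minimality at the endpoint and $\Vol_J(L_r)\to 0$. One small slip: since a critical point of a convex function is a global \emph{minimum}, the inequality should read $\Vol_J(L_r)\ge\Vol_J(L)$ (not $\le$), which then forces $\Vol_J(L)\le\lim_{r\to 0}\Vol_J(L_r)=0$ as you intended.
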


	\begin{proof}
		The punctured disc $\overline{\D} \setminus \graffe{0}$ (with the standard complex structure as a subset of the complex plane $\C$) is biholomorphic to the cylinder $S^1 \times \R^+$, endowed with the complex structure given by 
		\[
		J_{\cil} \tonde{\partial \theta} = \partial t, \qquad  J_{\cil} \tonde{\partial t }= -\partial \theta,\]
		where $\graffe{\partial \theta, \partial t}$
		is the standard oriented basis. The biholomorphism is given by
		\[
		\varphi: S^1 \times \R^+ \longrightarrow \overline{\D} \setminus \graffe{0}; \quad \tonde{\theta, t} \mapsto e^{i \theta} \cdot e^{-t}= \tonde{e^{-t} \cos \theta ,e^{-t} \sin \theta },
		\]
		and, in particular, $\varphi \tonde{S^1 \times \graffe{0}}= \partial \D$.
		
		Suppose by contradiction that such a complex rank $1$ filling by holomorphic discs $\iota^{\prime}: N \rightarrow M$ existed; thus the only complex line in each tangent space $T_pN$ would be $\graffe{\partial \theta, \partial t}$, tangent to the holomorphic disc. 
		
		We would have a family $\graffe{\iota_t: L \rightarrow M}$, with $t \in [0, +\infty)$, of immersed submanifolds given by the restriction of $\iota^{\prime}: N \rightarrow B$ to the circle bundle of radius $r=e^{-t}$: observe that $\iota_0 $ coincides with the $J$-minimal $ \iota: L \rightarrow M$. 
		
		 For each $t \in [ 0, + \infty )$ we have that $\iota_t: L \rightarrow M$ is a totally real submanifold: if a tangent space $T_pL$ contained a complex line, it would contradict that $\iota^{\prime}: N \rightarrow M$ has complex rank $1$. 
		
		We claim that $\graffe{\iota_t}$ is a geodesic in $\mathcal{T}$ (see Definition \ref{special curve}): considering the vector field $X= \partial \theta$ we have that
		\[
		d \iota_t \tonde{X}= \frac{\partial \iota_t}{\partial \theta}= \frac{\partial \iota^{\prime}}{\partial \theta}, \qquad \mbox{ and } \qquad \frac{\partial \iota_t}{\partial t}=  \frac{\partial \iota^{\prime}}{\partial t},
		\]
		so equality (\ref{geodetiche}) holds because it is equivalent to
		\[
		\frac{\partial \iota^{\prime}}{\partial t}= J \tonde{\frac{\partial \iota^{\prime}}{\partial \theta}},
		\]
	 which is the Cauchy-Riemann equation for each holomorphic disc.  
		
		Applying Theorem \ref{convexity} we have that the $J$-volume functional restricted to the curve $\graffe{\iota_t} \subset \mathcal{T}$ is convex because $\Ric \leq 0$. Thanks to Lemma \ref{minimi} we have that $t=0$ is a critical point because $\iota_0$ is $J$-minimal, and $ \Vol_J(\iota_0)>0$. Furthermore it tends to $0$ for $t$ tending to $+ \infty$ because the Riemannian volume of $\iota_t \tonde{ L}$ tends to $0$ and $ \Vol_J \leq \Vol$ (see Lemma \ref{minimi}), and it leads to a contradiction because the $J$-volume functional is convex.

	\end{proof}
Observe that, according to Theorem \ref{j minimal}, the main theorem concerns exactly minimal Lagrangian submanifolds when the ambient space is negative K\"{a}hler-Einstein.

In Calabi-Yau manifolds the statement above is trivial for special Lagrangian and $J$-minimal totally real submanifolds since they are calibrated submanifolds, thus their homology classes are not null.
\medskip

	 \section{Examples} \label{example}

We want to look for nontrivial examples of minimal Lagrangians in a K\"{a}hler manifold $\tonde{M, J, g, \omega } $. 

Denoting by $\rho$ the Ricci $2$-form, the Lagrangian condition implies that
\[
d \quadre{\omega \tonde{H, \cdot}}= \rho.
\] 
Thus a minimal Lagrangian submanifold $L$ would have $\rho|_L =0$, and looking for examples of minimal Lagrangian submanifolds in K\"{a}hler ambients is an overconstrained problem. See \cite{bryantproceedings} for details.

This last condition is automatically verified on a Lagrangian submanifold $L$ if $M$ is a K\"{a}hler-Einstein manifold, \emph{i.e.} if $ \rho \tonde{\cdot, \cdot}= \lambda \omega \tonde{\cdot, \cdot}$, for some $\lambda \in \R$.

Thus K\"{a}hler-Einstein manifolds are the suitable context in which look for minimal Lagrangian submanifolds. Very few examples are known; see \cite{YngIngLee} for results about existence of minimal Lagrangian tori in product spaces.  
We will exhibit a few more, following \cite{bryant}.

Let us consider a homogeneous polynomial $p$ of degree $d$ on $\C^{n+1}$, with $d > n+1$, and suppose that the zero locus 
\[
M= \graffe{ \quadre{z} \in \mathbb{P}^n \tonde{\C} \, | \, p \tonde{\quadre{z}}=0 }
\]
is smooth. Denoting with $J$ the induced complex structure, by the adjunction formula we have that its first Chern class is 
\[
c_1(M, J)= \tonde{n+1-d} \quadre{\omega},
\] 
where $\omega$ is the pull back of the Fubini-Study metric on $\mathbb{P}^n \tonde{\C}$. Observe that $c_1(M, J)<0$. 
A theorem of Aubin and Yau (see \cite{aubin}, \cite{yau}) guarantees that there is a unique K\"{a}hler-Einstein $2$-form $\overline{\omega}$ on $M$ in the class $-c_1(M,J)$. Let $g$ denote the corresponding metric.

If $p$ has real coefficients, $M$ is invariant under complex conjugation on $\mathbb{P}^n \tonde{\C}$: its restriction to $M$ gives a \emph{real structure} $c \in \Diff \tonde{M}$, i.e. an antiholomorphic involution such that $c^*J=-J$.  

Since both $c^* \overline{\omega}$ and $-\overline{\omega}$ are K\"{a}hler-Einstein for $\tonde{M, c^*J}$, it follows that $c^* \overline{\omega}=-\overline{\omega}$ by uniqueness, and that $c$ is an isometry with respect to the metric $g$:
\[
c^*g \tonde{\cdot, \cdot}= g \tonde{c \cdot, c \cdot}= \overline{\omega} \tonde{-J (c \cdot), c \cdot}= \overline{\omega} \tonde{J \cdot,  \cdot}= g \tonde{\cdot, \cdot}.
\] 
The fixed point set $L$ of $c$, called the \emph{real locus}, if non empty, is a totally geodesic (thus minimal) Lagrangian submanifold of $M$, in fact
\[
\overline{\omega}_p \tonde{v,w}=\overline{\omega}_p \tonde{c(v), c(w)}= c^* \overline{\omega}_p \tonde{v,w}= - \overline{\omega}_p \tonde{v,w},
\]
for any $p\in L$ and $v,w \in T_pL$, thus $\overline{\omega}|_L \equiv 0$.

\paragraph{First example:} Let us consider the homogeneous polynomial 
\[
P \tonde{X,Y,Z,W}=X^6+Y^6-Z^6-W^6.
 \]
  The zero locus 
\[
M= \graffe{\quadre{X:Y:Z:W} \in \p\tonde{\C} \, | \, X^6+Y^6-Z^6-W^6=0}
\]
admits a negative K\"{a}hler-Einstein structure, and the real locus
\[
T:= M \cap \p\tonde{\R} = \graffe{\quadre{x:y:z:w} \in \p\tonde{\R} \, | \, x^6+y^6-z^6-w^6=0}
\]
is a minimal Lagrangian submanifold of real dimension $2$, as above.

Observe that the curve $\mathcal{C}= \graffe{\tonde{x,y} \in \R^2 \, | \, x^6+y^6=1}$ is diffeomorphic to a circle, so 
\[
T= M \cap \p\tonde{\R} \cong \tonde{\mathcal{C} \times \mathcal{C}  } / \sim \, , 
\]
where $\tonde{\tonde{x,y}, \tonde{z,w}} \sim \tonde{\tonde{-x,-y}, \tonde{-z,-w}}\ $; hence $T$ is diffeomorphic to a $2$-torus.
Thus $T$ is a minimal Lagrangian torus in a negative K\"{a}hler-Einstein manifold.

\paragraph{Second example:} Let us consider the polynomial 
\[
q(x,y,z)= \tonde{x^2+ y^2-1}^4 +z^8.
\]
The critical locus of $q$ in $\R^3$ is the circle 
\[
C= \graffe{ \tonde{x,y,z} \in \R^3 \, | \, x^2+ y^2-1=0,\, z=0 }
\] plus the origin $O= \tonde{0,0,0}$. Since $q(C)=0$ and $q(O)=1$, the regular values of $q$ are all the real numbers except $0$ and $1$. 

Let $\varepsilon $ be a real number in $\tonde{0, 1}$ and consider the polynomial $q_{\varepsilon}:= q - \varepsilon$. Since $0$ is not a critical value for $q_{\varepsilon}$, the zero locus of $q_{\varepsilon}$ is smooth, and is a boundary component of the region $q^{-1}\quadre{0, \varepsilon}$, which retracts onto $C$. 

Now let us consider the homogeneous polynomial
\[
Q\tonde{X,Y,Z,W}= (X^2+Y^2-W^2)^4 +Z^8- \varepsilon W^8= W^8 \cdot q_{\varepsilon} \tonde{\frac{X}{W} \, , \frac{Y}{W}\, ,\frac{Z}{W}}.
\]
Its singular points in $\p\tonde{\C}$ are the two nonreal points $  \quadre{\pm i:1:0:0}$, hence $M= \graffe{Q=0} \subset \p\tonde{\C}$ is a compact K\"{a}hler orbifold, so the Calabi conjecture holds; cf. \cite{joyce}. Thus, away from the singular points, $M$ admits a negative K\"{a}hler-Einstein structure.

 Since $Q(X,Y,Z,W)=W=0$ only at the origin, the real locus is smooth and is given by
\[
T:= M \cap \p \tonde{\R}= \graffe{ \quadre{X:Y:Z:1} \in \p \tonde{\R} \, | \, q_{\varepsilon} \tonde{X,Y,Z}=0 }.
\]
It is a minimal Lagrangian torus as above.

\paragraph{Conclusions}

Non existence results in Section \ref{section nonexistence of discs} can be applied to very special examples: totally geodesic Lagrangian submanifolds in K\"{a}hler manifolds with non-positive sectional curvature.\\
It is possible to apply Proposition \ref{prop rigidity} to the two examples above; in particular they do not admit any filling by holomorphic discs.
\\Moreover, thanks to Corollary 6.1 in \cite{LP2}, any small perturbation of the polynomial $p$ gives a new K\"{a}hler-Einstein manifold and a locally unique minimal Lagrangian submanifold (in general, not totally geodesic). Proposition \ref{prop rigidity} can be applied also to these examples to prove that they do not admit any filling by holomorphic discs.

Analogous examples of minimal Lagrangians in higher dimensions can be studied only via Theorem \ref{main thm}. Furthermore, any small perturbation of $M$ in the same K\"{a}hler class, with negative Ricci curvature, has a locally unique $J$-minimal compact submanifold (see Corollary 6.1 in \cite{LP2}). 
Such submanifolds are not minimal Lagrangian in general, hence Theorem \ref{main thm} is necessary to prove that they do not admit any filling by holomorphic discs.

	\bibliographystyle{acm}
	\bibliography{Bibliografiatesi}

\end{document}